\newtheorem{theorem}{Theorem}
\newtheorem{lemma}{Lemma}
\newtheorem{remark}{Remark}
\numberwithin{equation}{section}
\numberwithin{lemma}{section}
\numberwithin{proposition}{section}
\numberwithin{corollary}{section}
\title[Limit of Fractional Power Sobolev Inequalities]{Limit of Fractional Power Sobolev Inequalities}
\author{Sun-Yung Alice Chang}
\address{Princeton University}
\email{chang@math.princeton.edu}
\author{Fang Wang}
\address{Shanghai Jiao Tong University}
\email{fangwang1984@sjtu.edu.cn}
\thanks{Chang's research was supported in part by the NSF grant No. DMS-1509505; 
 Wang's research was supported in part by National Natural Science Foundation of China No. 11401377. }
\begin{document}

\begin{abstract}
We derive the Moser-Trudinger-Onofri inequalities on the 2-sphere and the 4-sphere as the limiting cases of the fractional power Sobolev inequalities on the same spaces, and justify our approach as the dimensional continuation 
argument initiated by Thomas P. Branson.
\end{abstract}

\maketitle

%%%%%%
\section{Introduction}
In this paper, we will establish, in two cases, the log exponential 
type Sobolev inequalities as the limiting case of the fractional power 
Sobolev inequalities. One such case is the Moser-Trudinger-Onofri inequality 
on the 2-sphere, which we will establish as the limiting of the fractional 
power Sobolev inequalities on the same 2-sphere. 

Our approach is motivated by a dimensional continuation argument by T. Branson. In \cite{Br1} and in different lectures, Branson often mentioned his way to "guess" the correct formula or inequality on a borderline case through 
a dimension continuation argument. We now illustrate one example of 
his approach. One looks at the classical Sobolev embedding theorem for $W^{1,2}\hookrightarrow L^{\frac{2n}{n-2}}$ on the standard sphere $(\mathbb{S}^n, d\theta^2)$ when $ n \geq 3$. In this case, the embedding can be expressed as the following sharp inequality: 
\begin{equation}\label{eq.1.1}
c_n \left(\fint_{\mathbb{S}^n} |f|^{\frac{2n}{n-2}} d\theta\right)^{\frac{n-2}{n}}  \leq \fint_{\mathbb{S}^n} |\nabla_{\theta}f|^2 d\theta+ c_n \fint_{\mathbb{S}^n}|f|^2 d\theta,
\quad \forall f\in W^{1,2}(\mathbb{S}^n), 
\end{equation}
where 
$$
\fint_{\mathbb{S}^n}  d\theta=\frac{\int_{\mathbb{S}^n} d\theta}{|\mathbb{S}^n|}, \quad
c_n=\frac{n(n-2)}{4}.
$$
On the other hand, when $n =2$, on $\mathbb{S}^2$, there is the famous Moser-Trudinger-Onofri inequality: 
\begin{equation}\label{eq.1.2}
\ln \fint_{\mathbb{S}^2} e^{2\omega} d\theta \leq \fint_{\mathbb{S}^2} |\nabla_{\theta} \omega |^2 d\theta + 2\fint_{\mathbb{S}^2} \omega d\theta,
\quad \forall \omega\in W^{1,2}(\mathbb{S}^2). 
\end{equation}
 See \cite{Mo1}, \cite{On1}.

Traditionally, Moser-Trudinger-Onofri inequality (\ref{eq.1.2}) was derived from a method very different than the derivation of Sobolev inequality (\ref{eq.1.1}). We now describe Branson's heuristic argument to derive (\ref{eq.1.2}) from (\ref{eq.1.1}) by a dimension continuation argument when $n\rightarrow 2$.
To do so, for a given function $f\in W^{1,2}(\mathbb{S}^n)$ when $n\geq 3$,
we may assume it is nonnegative and write $f=e^{\frac{n-2}{2}\omega}$. Then we 
rewrite (\ref{eq.1.1}) as
\begin{equation} \label{eq.1.3}
c_n\left[
\left( \fint_{\mathbb{S}^n} e^{n\omega } d\theta \right)^{\frac{n-2}{n}} -\fint e^{(n-2)\omega} d\theta 
\right]
\leq 
\left(\frac{n-2}{2}\right)^2 \fint_{\mathbb{S}^n} e^{(n-2)\omega} |\nabla_{\theta} \omega|^2 d\theta. 
\end{equation}
Divide both side of (\ref{eq.1.3}) by $(\frac{n-2}{2})^2$ and rewrite the left hand side, we get
\begin{equation}\label{eq.1.4}
\frac{n}{n-2}\left[
\left(\fint_{\mathbb{S}^n} e^{n\omega} d\theta \right)^{\frac{n-2}{n}}-1 -\fint_{\mathbb{S}^n} \left(e^{(n-2) \omega}-1\right) d\theta
\right]
\leq \fint_{\mathbb{S}^n} e^{(n-2)\omega} |\nabla_{\theta} \omega|^2 d\theta. 
\end{equation}
We now observe that if we "formally" let $n\rightarrow 2$ and apply L'H\^{o}pital rule, we get the Moser-Trudinger-Onofri inequality (\ref{eq.1.2}).

In this note we will replace the parameter $n$ in the argument above by a "continuous" parameter, apply some results developed in scattering theory and make above dimension continuation argument by Branson rigorous. More specifically, on the 2-sphere, we consider the fractional power operator $P_{2\gamma} $ (as defined in \S 2.1 below), with leading symbol $|\xi|^{2\gamma}$ (\cite{JS1}), and the corresponding Sobolev embedding inequality for all $0<\gamma<1$ (\cite{Be1}):
\begin{equation}\label{eq.1.5}
Y_{\gamma}(\mathbb{S}^2) \|f\|^2_{L^{\frac{2}{1-\gamma}}(\mathbb{S}^2)} \leq \fint_{\mathbb{S}^2} fP_{2\gamma}f d\theta, 
\quad \forall f\in W^{\gamma,2}(\mathbb{S}^2),
\end{equation}
where $Y_{\gamma} (\mathbb{S}^2)=\frac{\Gamma(1+\gamma)}{\Gamma(1-\gamma)} |\mathbb{S}^2|^{\gamma}$. 

Our main result (Theorem 1 in Section 3) is that we can derive Moser-Trudinger-Onofri inequality (\ref{eq.1.2}) as a limit of the fractional Sobolev inequality (\ref{eq.1.5}) when $\gamma\rightarrow 1$. 

The main tool we will use comes from scattering theory consideration. That is, 
we will apply an extension theorem to view any function $f$ defined on $\mathbb{S}^2$ as the boundary data of a solution $U_f$ of the Poisson equation defined on the 3-ball $\mathbb{B}^3$; and view the function $P_{2\gamma}f $ as the scattering matrix operating on the function $f$. In this sense, the extension function $U_f$ can be viewed as a function on $\mathbb{B}^3$ with weight 
$\rho^{1-2\gamma}$ (where $\rho$ is some distance function from $\mathbb{S}^2$ to $\mathbb{B}^3$): there we can view $U_f$ as defined on a space of dimension $n_{\gamma}=3+(1-2\gamma)$. In this sense, when $\gamma\rightarrow 1$, $n_{\gamma}\rightarrow 2$; thus the fact (\ref{eq.1.5}) tends to Moser-Trudinger-Onofri inequality when $ \gamma$ tends to one can be viewed as a dimensional continuation argument of T. Branson. 

It turns out above argument can be extended to operators of order higher than two. A second main result in the paper is to show the same method can be applied to derive the corresponding Moser-Trudinger-Onofri inequality on 4-sphere (Theorem 2 in Section 5 ) which corresponds to a sharp inequality with respect to 4-th order Paneitz operator on 4-sphere as the limiting case of the corresponding fractional power Sobolev inequalities, like that of (\ref{eq.1.5}), with $1<\gamma<2$ defined on 4-sphere. The proof of this higher order case 
turns out to be technically harder, mainly because we need to choose some suitable distance function $\rho$ (which depends on $\gamma$) with sharp asymptotic
estimates when the distance approaches the boundary.

This paper is organised as follows. In Section 2, we will first recall the 
definition of fractional operators $P_{2 \gamma}$ on the sphere. It turns out
this class of operators is a special case of a general class of fractional order GJMS operator defined on the boundary of conformal compact Einstein manifolds
(e.g. the sphere as boundary of the hyperbolic space) as introduced in
\cite{GJMS1} and \cite{GZ1}. As we have mentioned, the main tool we will use in the proof of our theorems is some Sobolev trace type inequalities --which we will call extension theorems. These extension theorems were first introduced by
 Caffarelli and Silv1estre \cite{CS1} for functions defined
on $\mathbb{R}^n$ with extensions on the upper half space $\mathbb{R}^{n+1}_{+}$and later generalized to functions defined on the boundary of conformal compact Einstein manifolds (\cite{CG1}, \cite{CC1}). We will briefly recall some basic definitions and the statements of these extension theorems in section 2. We remark
that for readers only
interest in the method to prove Theorem 1, one can skip read section 2.2.2, 
the result there is only needed in the proof of Theorem 2.
In Section 3, we derive Moser-Trudinger-Onofri inequality as a limiting case of the fractional Sobolev inequality on 2-sphere. In Section 4, we derive some
further estimates for the special distance function which were called as  
the "adapted geodesic defining" function (\cite{CC1}) and the corresponding weighted scalar curvature and Schouten tensors for adapted metrics on hyperbolic space $\mathbb{H}^5$. Based on these estimates, we derive in section 5 a generalized 
Moser-Trudinger-Onofri type inequality for functions defined on the 4-sphere, which were established earlier by \cite{BCY1} and \cite{Be1},  again as the limit of the fractional Sobolev inequality.

\vspace{0.2in}
%%%%%%
\section{Geometric Preliminary}

\subsection{$P_{2 \gamma}$  Operators}

We first recall (\cite{Br1}, \cite{Be1}) that on the standard sphere $(\mathbb{S}^n, [d\theta^2])$, there is a class of pseudo-differential operators 
$P_{2 \gamma} $, defined for $\gamma \in (0, \frac{n}{2})$ as follows:  
$$
\begin{aligned} \label{pgamma}
&P_{2\gamma}=\frac{\Gamma(B+\frac{1}{2}+\gamma)}{\Gamma(B+\frac{1}{2}-\gamma)}, \quad\mathrm{where}\quad 
B=\sqrt{\triangle_{\theta}+\left(\tfrac{n-1}{2}\right)^2}.
\end{aligned}
$$
Associated with the operator, the fractional Q-curvature is defined as:
$$
Q_{2\gamma}=\frac{2}{n-2\gamma} P_{2 \gamma}(1) =  \frac{2}{n-2\gamma} \frac{\Gamma(\frac{n}{2}+\gamma)}{\Gamma(\frac{n}{2}-\gamma)}. 
$$
$P_{2\gamma} $ is an operator with leading symbol $|\xi|^{2\gamma}$ satisfies
some conformal covariant property, for example when $ \gamma =1$, $P_2 = 
\triangle_{\theta}+ \frac {n(n-2)}{4} $ is the conformal Laplace opertor on
$S^n$, $P_4$ is the 4-th order Paneitz operator (\cite{Pa1}).
The property of $P_{2 \gamma}$ which is relevant to us is the following
sharp Sobolev inequality for $\gamma\in (0, \frac{n}{2})$, $q=\frac{2n}{n-2\gamma}$, 
\begin{equation}\label{eq.2.10}
Y_{\gamma}(\mathbb{S}^n) \|f\|^2_{L^{q}(\mathbb{S}^n)}
% \left(\int_{\mathbb{S}^n}|f|^{\frac{2n}{n-2\gamma}} d\theta\right)^{\frac{n-2\gamma}{n}}
\leq \int_{\mathbb{S}^n} fP^{}_{2\gamma} fd\theta, \quad \forall \ f\in H^{\gamma}(\mathbb{S}^n).
\end{equation}
See \cite{GQ1} \cite{Be1} \cite{CT1} \cite{CT2} where 
\begin{equation}\label{eq.2.9}
Y_{\gamma}(\mathbb{S}^n) =2^{\frac{2\gamma}{n}}\pi^{\frac{\gamma(n+1)}{n}} \frac{\Gamma(\frac{n}{2}+\gamma)}{\Gamma(\frac{n}{2}-\gamma)}\left[ \Gamma\left(\frac{n+1}{2}\right)\right]^{-\frac{2\gamma}{n}} =\frac{\Gamma(\frac{n}{2}+\gamma)}{\Gamma(\frac{n}{2}-\gamma)} \left|\mathbb{S}^n\right|^{\frac{2\gamma}{n}}. 
\end{equation}
It turns out $P_{2 \gamma} $ on $\mathbb{S}^n$ is a special case of a general
class of fractional GJMS opertors defined on the boundary (called conformal
infinity) of some general conformal compact Einstein manifolds. Here we 
will recall some basic background.

Fractional GJMS Operators are defined on any closed manifold $M$ with a conformal class of metric $[h]$ such that $(M, [h])$ can be embedded as the conformal infinity of a Poincar\'{e}-Einstein manifold $(X^{n+1}, g_+)$. More explicitly, $(X^{n+1}, g_+)$ and $(M, [h])$ should satisfy
\begin{equation*}\label{eintein}
\begin{cases}
Ric_{g_+}=-ng_+ &\textrm{in}\  X, \\
\rho^2g_+|_{TM}\in [h] &\textrm{on}\ M, 
\end{cases}
\end{equation*}
where $\rho$ is a boundary defining function for  $\partial X=M$.

A special example is the Hyperbolic space $\mathbb{H}^{n+1}$ in the ball model:
\begin{equation}\label{ballmodel}
\mathbb{B}^{n+1}=\{x\in\mathbb{R}^{n+1}: |x|<1\}\quad \mathrm{and}\quad g_+ =\frac{4dx^2}{(1-|x|^2)^2}=\frac{4(dr^2+r^2d\theta^2)}{(1-r^2)^2}, 
\end{equation}
where $(r,\theta)$ are the polar coordinates on the unit ball $\mathbb{B}^{n+1}$. Take $h=d\theta^2$, the canonical spherical metric. Then the geodesic normal defining function  w.r.t. $h$ is $\rho=\frac{2(1-r)}{1+r}$. For $\rho\in (0,2)$, 
\begin{equation}\label{hypmetric}
g_+=\rho^{-2}\left(d\rho^2+\left(1-\frac{\rho^2}{4}\right)^2 d\theta^2 \right), \quad \rho^2g_+|_{T\mathbb{S}^n} = d\theta^2. 
\end{equation}

Based on the study of boundary regularity in \cite{CDLS} 
and the spectral and resolvent theorems for Laplacian of $g_+$  in \cite{MM1} \cite{Ma1} \cite{Gu1}, we can consider the following equation:
$$
(\triangle_+ - s(n-s))u=0.
$$
If $\mathrm{Re}(s)>\frac{n}{2},  s(n-s)\notin \sigma_{pp}(\triangle_+),  2s-n\notin \mathbb{N}$, then given any $f\in C^{\infty}(M)$ there is a unique solution satisfying $\rho^{s-n}u|_{M}=f\in C^{\infty}(M)$. Moreover, $u$ takes the form
$$
u=\rho^{n-s} F + \rho^sG, \quad F, G\in C^{\infty}(\overline{X}), \quad F|_{M}=f. 
$$
We define the scattering operator $S(s)$ by
$$
S(s): C^{\infty}(M)\longrightarrow C^{\infty}(M), \quad S(s)f=G|_{M}. 
$$
Here $S(s)$ is a one parameter family of conformally invariant elliptic pseudo-differential operators or order $2s-n$, which can be meromorphically extended to $\mathbb{C}\backslash \{\frac{n-1}{2}-K-\mathbb{N}_0\}$ where $2K$ is the order up to which $g_+$ is even in its boundary asymptotic expansion. 
If $s_0>\frac{n}{2}$ is a pole satisfying $ 2s_0-n\in\mathbb{N},s_0(n-s_0)\notin\sigma_{pp}(\triangle_+)$, then the order of this pole is at most $1$ and the residue is a differential operator on $M$. In particular, if  $\frac{n^2}{4}-k^2\notin\sigma_{pp}(\triangle_+)$ for $k\leq\min\{\frac{n}{2}, K\}$, then
$$
\mathrm{Res}_{s=s_0} S(s)=c_kP_{2k}, \quad c_k=\frac{(-1)^{k-1}}{2^{2k}k!(k-1)!}, 
$$
where $P_{2k}$ is the GJMS operator of order $2k$ on $(M,[h])$. See \cite{JS1}, \cite{GZ1} for more details. 

For simplicity, we define the renormalised scattering operators and the associated curvatures by
$$
\begin{gathered}
P_{2\gamma} =d_{\gamma}S\left(\frac{n}{2}+\gamma\right), \quad
Q_{2\gamma}=\frac{2}{n-2\gamma} P_{2\gamma}(1); \quad
%\\ 
d_{\gamma}=2^{2\gamma}\frac{\Gamma(\gamma)}{\Gamma(-\gamma)}.
\end{gathered}
$$
While $\gamma\notin \mathbb{N}$, $P_{2\gamma}$ is also called the \textit{fractional GJMS operators} and $Q_{2\gamma}$ is the fractional Q-curvature.
From the definition we see the fractional order GJMS operators $P_{2\gamma}$ should also depend on the interior metric $(X^{n+1}, g_+)$, not only on $(M, [h])$. 
A special case is the Hyperbolic space $\mathbb{H}^{n+1}$, which has conformal infinity $(\mathbb{S}^n, [d\theta^2])$.
In this case, the rigidity theorems given in  \cite{ST1} \cite{DJ1} and \cite{LQS1} tell us if $(X^{n+1}, g_+)$ is Poincar\'{e}-Einstein with conformal infinity $(\mathbb{S}^n, [d\theta^2])$, then $(X^{n+1}, g_+)$ must be the Hyperbolic space $\mathbb{H}^{n+1}$. 
So the fractional GJMS operator on the standard sphere is uniquely defined.
\subsection{Extension Formulas.}
The fractional GJMS operator was intensively studied by Case-Chang in \cite{CC1}. Here we recall some extension formulas given there. 
Let $(X^{n+1}, g_+)$ be a Poinar\'{e}-Einstein manifold with conformal infinity $(M, [h])$. We fix a representative $h$ here. 

Let $\gamma\in(0, \frac{n}{2})$ be such that $\mathrm{Spec}(\triangle_+)>\frac{n^2}{4}-\gamma^2$ and $\rho_*$ be the \textit{adapted boundary defining function} defined as follows:
\begin{equation}\label{def.adapted}
\begin{gathered}
\rho_*=v_*^{\frac{1}{n-s}}, \quad\mathrm{where}
\\
\triangle_+ v_*-s(n-s)v_*=0,\quad  \rho^{s-n} v_*|_{M}=1. 
\end{gathered}
\end{equation}
% i.e. $\rho_*=v_*^{\frac{1}{n-s}}$, where $s=\frac{n}{2}+\gamma$ and  $v_*$ is uniquely determined by
Notice that we always write $s=\frac{n}{2}+\gamma$. 
\subsubsection{$\gamma\in(0,1)$} Set $m_0=1-2\gamma\in(-1,1)$. Then for each $f\in C^{\infty}(M)$, (see \cite{CC1}, Lemma 7.2, equation appeared in the proof of Theorem 7.3)  
\begin{equation}\label{eq.2.1}
\int_M fP_{2\gamma}f \mathrm{dvol}_h -\frac{n-2\gamma}{2}\int_M Q_{2\gamma} |f|^2 \mathrm{dvol}_h
= -\frac{d\gamma}{2\gamma}\int_X |\nabla U_f|^2 \rho_*^{m_0} d\mathrm{dvol}_{g_*}. 
\end{equation}
Here $g_*=\rho_*^2g_+$,  and $U_f$ is uniquely determined by the equation
\begin{equation}\label{eq.2.2}
\begin{cases}
\triangle_{\phi_0} U=0, &\ \mathrm{in}\ X^{n+1}, 
\\
U=f, &\ \mathrm{on}\ M, 
\end{cases}
\end{equation}
where $\triangle_{\phi_0}= \triangle_{g_*} -m_0 \rho_*^{-1}\nabla \rho_*$ is the weighed Laplacian and $\nabla$ is the connection w.r.t. $g_*$. The equation (\ref{eq.2.2}) is equivalent to the following one:
\begin{equation}\label{eq.2.3}
\triangle_+ u -s(n-s)u=0, \quad \rho_*^{s-n}u|_{M}=f,
\end{equation}
via the transformation $u=\rho_*^{n-s}U$. 
\begin{lemma}\label{lem.2.1}
Let $\gamma\in(0,1)$ and $m_0=1-2\gamma$.
 If the boundary dimension $n\geq 2$ and the Yamabe invariant of $(M,[h])$ is nonnegative, then for arbitrary $f\in C^{\infty}(M)$, the function $U_f$ defined in (\ref{eq.2.2}) is the unique minimizer of  the functional 
$$
I_0(V)=\int_{X^{n+1}} |\nabla V|^2 \rho_*^{m_0} \mathrm{dvol}_{g_*}, \quad V\in\mathcal{V}^0_f.
$$
Here $\mathcal{V}^0_f$ is the function space
$$
\mathcal{V}^0_f=\left\{ V\in C^{l, \alpha}(\overline{X}^{n+1})\cap C^2(X^{n+1}): V|_{M}=f
\right\}.
$$
If $\gamma\in (0,1/2)$, then $(l,\alpha)=(0,2\gamma)$; if $\gamma\in[1/2,1)$, then $(l,\alpha)=(1,2\gamma-1)$. 
\end{lemma}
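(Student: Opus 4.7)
The plan is to apply the direct method of the calculus of variations. Since $I_0$ is a nonnegative convex quadratic functional on the affine space $\mathcal{V}^0_f$, any critical point is automatically a global minimizer, and the unique minimizer is characterized by its Euler-Lagrange equation. The argument splits into: deriving the Euler-Lagrange equation and matching it with (\ref{eq.2.2}); proving uniqueness from strict convexity; producing a minimizer by weak compactness in a weighted Sobolev space; and verifying the sharp H\"older regularity that places the minimizer in $\mathcal{V}^0_f$.

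For the Euler-Lagrange step, I perturb by $V = U + t\varphi$ with $\varphi|_M=0$ and compute
$$\left.\frac{d}{dt}\right|_{t=0} I_0(V) = 2\int_X \rho_*^{m_0}\langle\nabla U,\nabla\varphi\rangle_{g_*}\,\mathrm{dvol}_{g_*}.$$
Integration by parts has no boundary contribution, since $\varphi|_M=0$ and $m_0>-1$ makes the weight integrable up to $M$, and the weighted divergence $\rho_*^{-m_0}\mathrm{div}_{g_*}(\rho_*^{m_0}\nabla U)$ expands to the weighted Laplacian $\triangle_{\phi_0}U$ of (\ref{eq.2.2}) up to the paper's sign convention for $\triangle$. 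Hence critical points of $I_0$ in $\mathcal{V}^0_f$ are precisely the solutions of (\ref{eq.2.2}), identifying them with $U_f$. Uniqueness follows from strict convexity: two minimizers $V_1,V_2$ give $W = V_1 - V_2$ with $W|_M=0$ and $\int_X \rho_*^{m_0}|\nabla W|^2\,\mathrm{dvol}_{g_*} = 0$, forcing $W$ constant and hence zero.

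For existence, a minimizing sequence $\{V_k\}$ is bounded in the weighted Sobolev space $H^1(X, \rho_*^{m_0}\,\mathrm{dvol}_{g_*})$ provided a weighted Poincar\'e/trace inequality is in hand; this is where the hypotheses $n\geq 2$ and nonnegative Yamabe invariant of $(M,[h])$ enter, since through the Case-Chang extension framework \cite{CC1} they ensure the positivity of $P_{2\gamma}$ and a weighted trace-Sobolev inequality controlling $\|V-\tilde f\|_{L^2(\rho_*^{m_0})}$ by $I_0(V)^{1/2}$, with $\tilde f$ any smooth extension of $f$. Weak lower semicontinuity of the convex $I_0$ then yields a minimizer. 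The main obstacle is the final boundary regularity claim: from the scattering theory of Section~2.1, $\rho_*^{n-s}U_f = \rho_*^{n-s}F + \rho_*^{s}G$ with $F,G\in C^\infty(\overline{X})$ and $F|_M=f$, so $U_f = F + \rho_*^{2\gamma}G$, and the stated H\"older exponents are dictated by the factor $\rho_*^{2\gamma}$, giving $C^{0,2\gamma}$ when $\gamma\in(0,1/2)$ and $C^{1,2\gamma-1}$ when $\gamma\in[1/2,1)$. Matching this formal expansion to the variational minimizer, and promoting it to the sharp H\"older class rather than mere Sobolev regularity, relies on the Muckenhoupt $A_2$ regularity theory for degenerate elliptic equations, valid precisely because $|m_0|<1$, together with the boundary regularity analysis in \cite{CC1}.
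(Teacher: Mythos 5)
Your proposal is correct in its conclusions but takes a genuinely different and substantially more elaborate route than the paper. The paper's proof is three lines: it observes that minimizers of $I_0$ over $\mathcal{V}^0_f$ correspond exactly to solutions of (\ref{eq.2.2}), equivalently of (\ref{eq.2.3}) via $u=\rho_*^{n-s}U$, and then invokes the scattering-theory fact that (\ref{eq.2.3}) has a unique solution once $\triangle_+$ has no $L^2$-eigenvalue --- which is what the nonnegative Yamabe hypothesis buys, by Lee's theorem \cite{Le1}. Existence of the minimizer is thus immediate (the solution $U_f$ of (\ref{eq.2.2}) is itself the minimizer, by the convexity you also use), and uniqueness is read off from PDE uniqueness rather than from variational structure. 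You instead derive uniqueness from strict convexity and the parallelogram identity, which is correct and arguably more self-contained, and you supply an independent existence argument via the direct method in a weighted Sobolev space. That existence step is where you add real content not in the paper, but it is also where your argument is softest: you need a weighted Poincar\'e/trace inequality, and you gesture at getting it from positivity of $P_{2\gamma}$ in the Case--Chang framework, but that positivity is itself established in \cite{CC1} by exhibiting $U_f$ --- so one must be careful not to run in a circle. Since the scattering-theory construction already produces the required $U_f$, the direct-method detour is redundant for this lemma; the paper's route is more economical precisely because it leans on that construction. On the other hand, you do address the $C^{l,\alpha}$ membership in $\mathcal{V}^0_f$ via the expansion $U_f = F + \rho_*^{2\gamma}G$, a regularity point the paper leaves implicit, and your bookkeeping of the H\"older exponents in the two cases $\gamma\in(0,1/2)$ and $\gamma\in[1/2,1)$ is consistent with the exponent dictated by $\rho_*^{2\gamma}$.
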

\begin{proof}
Notice that  $U$ is a minimizer of $I_0(V)$  if and only if  $U$ satisfies the equation (\ref{eq.2.2}), or equivalently, $u=\rho_*^{n-s}U$ satisfies (\ref{eq.2.3}). The uniqueness comes from the uniqueness of solution to (\ref{eq.2.3}). Since the Yamabe invariant of the conformal infinity is nonnegative, there is no $L^2$-eigenvalue for $\triangle_+$. See \cite{Le1}. Hence  (\ref{eq.2.3}) has a unique solution. 
\end{proof}

\subsubsection{$\gamma\in(1,2)$} Set $m_1=3-2\gamma\in (-1,1)$. Then for each $f\in C^{\infty}(M)$, (see \cite{CC1}, Lemma 7.6, equation (7.17)) 
\begin{equation}\label{eq.2.4}
\begin{aligned}
&\int_M fP_{2\gamma}f \mathrm{dvol}_h -\frac{n-2\gamma}{2}\int_M Q_{2\gamma} |f|^2 \mathrm{dvol}_h
\\
=&\ 
 \frac{d_{\gamma}}{8\gamma(\gamma-1)} 
\int_{X}  \left(  |\triangle_{\phi_1} U_f |^2+(n+m_1-1)J_{\phi_1}^{m_1}|\nabla U_f|^2_{g_*}
-4P_{ij}\nabla^iU_f \nabla^jU_f \right) \rho_*^{m_1} \mathrm{dvol}_{g_*}.
\end{aligned}
\end{equation}
Here $U_f$ is uniquely determined by the equation
\begin{equation}\label{eq.2.5}
\begin{cases}
L_{4,\phi_1}^{m_1} U=0, & \textrm{in $X$,}
\\
U=f, & \textrm{on $M$,}
\\
\lim_{\rho_* \rightarrow 0} \rho_*^{m_1} \frac{\partial U}{\partial \rho_*}=0. &
\end{cases}
\end{equation}
Here $L_{4,\phi_1}^{m_1}$ is the weighted GJMS operator of 4-th order w.r.t. $g_*$; $ J_{\phi_1}^{m_1} $ and $P_{\phi_1}^{m_1}=P_{g_*}$ are the weighted scalar curvature and weighted Schouten tensor of $g_*$.
Please refer to \cite{CC1} for explicit definitions. 
By conformal transformation $u=\rho_*^{n-s}U$, equation (\ref{eq.2.5}) is equivalent to
\begin{equation}\label{eq.2.6}
\begin{gathered}
\left[\triangle_+-(s-2)(n-s+2) \right]  \left[\triangle_+ -s(n-s)\right]u=0, 
\\
\quad \rho_*^{s-n}u|_{M}=f,\quad \rho_*^{m_1}\partial_{\rho_*}(\rho_*^{s-n}u)|_{M}=0. 
\end{gathered}
\end{equation}
See formulae (3.3) in \cite{CC1}.

\begin{lemma}\label{lem.2.2}
Let $\gamma\in(1,2)$ and $m_1=3-2\gamma$. 
If the boundary dimension $n\geq 4$ and  the Yamabe invariant of $(M,[h])$ is nonnegative, then for arbitrary $f\in C^{\infty}(M)$, the function $U_f$ defined in (\ref{eq.2.5}) is the unique minimizer of  the functional 
$$
I_1(V)=\int_{X}  \left(  |\triangle_{\phi_1} U_f |^2+(n+m_1-1)J_{\phi_1}^{m_1}|\nabla U_f|^2_{g_*}
-4P_{ij}\nabla^iU_f \nabla^j U_f \right) \rho_*^{m_1} \mathrm{dvol}_{g_*}.
$$
Here $\mathcal{V}^1_f$ is the function space
$$
\mathcal{V}^1_f=\left\{ V\in C^{l, \alpha}(\overline{X}^{n+1})\cap C^4(X^{n+1}): V|_{M}=f, \lim_{\rho_* \rightarrow 0} \rho_*^{m_1} \frac{\partial V}{\partial \rho_*}=0
\right\}.
$$
If $\gamma\in (0,3/2)$, then $(l,\alpha)=(2,2\gamma-2)$; if $\gamma\in[3/2,2)$, then $(l,\alpha)=(3,2\gamma-3)$. 
\end{lemma}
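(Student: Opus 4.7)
The plan mirrors the two-step argument of Lemma~\ref{lem.2.1}, adapted to the fourth-order setting: identify the Euler--Lagrange equation of $I_1$ with (\ref{eq.2.5}), use the spectral hypothesis to solve that equation uniquely, and then exploit the quadratic structure of $I_1$ to promote the unique critical point to the unique minimizer.

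First, I would recast $I_1(V)$ in divergence form. By the construction of $L^{m_1}_{4,\phi_1}$ in \cite{CC1}, the integrand of $I_1$ is exactly the Dirichlet density of this weighted GJMS-type operator, so integration by parts yields
$$
I_1(V) = \int_X V \cdot L^{m_1}_{4,\phi_1} V \cdot \rho_*^{m_1}\,\mathrm{dvol}_{g_*} + \mathcal{B}[V],
$$
where $\mathcal{B}[V]$ is a boundary integral on $M$. The essential Dirichlet condition $V|_M=f$ fixes part of $\mathcal{B}[V]$ under admissible perturbations, while the weighted Neumann condition $\lim_{\rho_*\to 0}\rho_*^{m_1}\partial_{\rho_*}V=0$ encoded in $\mathcal{V}^1_f$ is precisely the natural condition that kills the remaining boundary flux.

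Next, I would compute the first variation of $I_1$ along a perturbation $\psi$ with $\psi|_M=0$ and $\lim_{\rho_*\to 0}\rho_*^{m_1}\partial_{\rho_*}\psi=0$. The rewriting above gives
$$
\left.\frac{d}{dt}\right|_{t=0} I_1(V+t\psi) = 2\int_X \psi \cdot L^{m_1}_{4,\phi_1} V \cdot \rho_*^{m_1}\,\mathrm{dvol}_{g_*},
$$
so critical points in $\mathcal{V}^1_f$ are exactly the solutions of (\ref{eq.2.5}). Passing to $u=\rho_*^{n-s}V$ reduces this to the factored scattering problem (\ref{eq.2.6}). For $n\geq 4$ and $\gamma\in(1,2)$, both constants $s(n-s)$ and $(s-2)(n-s+2)$ sit strictly below $n^2/4$, and the nonnegative-Yamabe hypothesis forces $\sigma_{pp}(\triangle_+)$ to be empty in this range by \cite{Le1}. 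Consequently each of the two second-order factors is invertible with the prescribed asymptotic data, (\ref{eq.2.6}) has a unique solution, and $U_f$ is the unique critical point in $\mathcal{V}^1_f$.

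Finally, since $I_1$ is quadratic one has the exact identity $I_1(U_f+\psi) = I_1(U_f) + I_1(\psi)$ for admissible $\psi$, so it suffices to verify $I_1(\psi)\geq 0$. Using the same factorization into two second-order scattering operators one rewrites $I_1(\psi)$ as a nonnegative combination of weighted $L^2$-integrals, and the case of equality $I_1(\psi)=0$ forces $\psi$ itself to solve (\ref{eq.2.5}) with zero Dirichlet trace, hence $\psi\equiv 0$ by the uniqueness just established. I expect the main obstacle to be justifying the weighted integrations by parts near $\rho_*=0$: the weight $\rho_*^{m_1}$ is degenerate or singular depending on the sign of $m_1$, so one must check that the regularity built into $\mathcal{V}^1_f$ --- namely $C^{2,2\gamma-2}$ for $\gamma<3/2$ and $C^{3,2\gamma-3}$ for $\gamma\in[3/2,2)$ --- is exactly the correct threshold to make every boundary flux from $L^{m_1}_{4,\phi_1}$ vanish and to realize the weighted Neumann condition as the natural one. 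I would invoke the boundary asymptotics of \cite{CC1} and \cite{CDLS} at this step rather than re-derive them; once those are in place the Hilbert-space argument above runs in parallel with the proof of Lemma~\ref{lem.2.1}.
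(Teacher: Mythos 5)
Your plan does track the paper's proof in outline — first variation identifies the Euler--Lagrange equation with~(\ref{eq.2.5}), the factorization~(\ref{eq.2.6}) together with the absence of $L^2$ eigenvalues gives uniqueness of the critical point, and the quadratic structure plus nonnegativity of $I_1$ upgrades the critical point to the minimizer. But there are two places where your argument deviates in ways that matter.

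The genuine gap is in your final step: you assert that ``using the same factorization into two second-order scattering operators one rewrites $I_1(\psi)$ as a nonnegative combination of weighted $L^2$-integrals.'' This is not justified. The integrand of $I_1$ contains the curvature terms $(n+m_1-1)J_{\phi_1}^{m_1}|\nabla\psi|^2_{g_*}$ and $-4P_{ij}\nabla^i\psi\nabla^j\psi$, whose signs are controlled by the Poincar\'e--Einstein geometry and are not a consequence of any naive factorization of $L^{m_1}_{4,\phi_1}$. The paper handles this by citing \cite{CC1} for the fact that $I_1\geq 0$ when $n\geq 4$; tellingly, the dimension restriction $n\geq 4$ never surfaces in your factorization sketch, which is a strong indication you are not capturing the actual mechanism. (A spectral-theoretic version of your idea — conjugate by $\rho_*^{n-s}$ and use that $(\lambda-a)(\lambda-b)\geq 0$ for $\lambda\geq n^2/4 > a,b$ — could conceivably be made to work, but it requires tracking the conformal Jacobian, the boundary flux, and the $L^2$ domain, none of which you address.) As written, the nonnegativity step needs to be replaced by the citation to \cite{CC1}, or worked out in full.

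A secondary imprecision is in the uniqueness step. You say ``each of the two second-order factors is invertible with the prescribed asymptotic data,'' but the fourth-order boundary-value problem~(\ref{eq.2.6}) does not split into two independent second-order problems with separately prescribed data. The actual argument, which the paper states, is that the second boundary condition $\rho_*^{m_1}\partial_{\rho_*}(\rho_*^{s-n}u)|_M=0$ forces the auxiliary function $w=(\triangle_+-s(n-s))u$ — which a priori satisfies $(\triangle_+-(s-2)(n-s+2))w=0$ — to have no leading incoming term and hence to lie in $L^2$, whence $w\equiv 0$ by the absence of point spectrum. Then $u$ solves the second-order scattering problem $\triangle_+ u - s(n-s)u=0$, $\rho_*^{s-n}u|_M=f$, whose uniqueness is already established as in Lemma~\ref{lem.2.1}. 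Your phrasing skips this reduction, which is the real content of the uniqueness proof.
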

\begin{proof}
According to \cite{CC1}, the functional $I_1(V)$ is nonnegative for $n\geq 4$. The minimizer is a solution to (\ref{eq.2.5}). When the boundary Yamabe type is nonnegative, the equation (\ref{eq.2.5}), or equivalently (\ref{eq.2.6}), has a unique solution. This is because the second boundary condition implies that 
$$
\triangle_+ u-s(n-s)u=0, \quad \rho_*^{s-n}u|_{M}=f.
$$
It is obviously that this has a unique solution. 
\end{proof}

%%%%
\subsubsection{Some remarks on the adapted defining function}
From the definition, $\rho_*$ depends on $\gamma$. So while $\gamma$ is not fixed, we need some uniform control of this function. Here we give a global uniform value estimate of $\rho_*$ by a special boundary defining function $\rho_L$. 
Define
\begin{equation}\label{eq.2.7}
\begin{gathered}
\rho_L=v_L^{-1},\quad \mathrm{where}\quad
\\
(\triangle_+ +n+1)v_L=1, \quad \rho v_L|_{M}=1.
\end{gathered}
\end{equation}
Recall that $\rho$ is the geodesic normal defining function corresponding to boundary metic $h$.
This $v_L$ was first introduced by Lee in \cite{Le1}  to construct $L^2$-test functions. Lee's computation  implies the following Lemma:
\begin{lemma}\label{lem.2.3}
If $(M, [h])$ is of nonnegative Yamabe type, then for $s\in (0,n)$ and $\rho_L$ defined in (\ref{eq.2.7}), the function $\psi=\rho_L^{n-s}$ is positive and satisfies
$$
\left(\triangle_+ -s(n-s)\right) \psi>0. 
$$
\end{lemma}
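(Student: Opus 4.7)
The plan is to deduce the claim via a direct pointwise computation, using the defining equation $(\triangle_+ + (n+1))v_L = 1$ for $v_L$ and invoking Lee's analysis of $v_L$ under the nonnegative Yamabe hypothesis at a single crucial step. The roadmap is: (i) positivity of $\psi$, (ii) algebraic simplification of $(\triangle_+ - s(n-s))\psi$ in terms of $v_L$, $v_L^2$ and $|\nabla v_L|^2_{g_+}$, and (iii) a sign argument for the resulting expression.

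First, since $\rho_L = v_L^{-1}$ and $\psi = \rho_L^{n-s} = v_L^{s-n}$, the positivity $\psi > 0$ reduces to $v_L > 0$. The latter is Lee's observation \cite{Le1}: the nonnegative Yamabe type assumption on $(M,[h])$ guarantees that $\triangle_+ + (n+1)$ is strictly positive on an appropriate weighted $L^2$-space, so the inhomogeneous boundary-value problem in (\ref{eq.2.7}) admits a unique, positive solution $v_L$.

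Next I would apply the power-rule identity
$$\triangle_+ (v_L^a) = a\, v_L^{a-1}\triangle_+ v_L - a(a-1)\, v_L^{a-2} |\nabla v_L|^2_{g_+}$$
with $a = s-n$, then substitute $\triangle_+ v_L = 1 - (n+1)v_L$ from (\ref{eq.2.7}). Collecting the $v_L^{s-n-1}$ and $v_L^{s-n}$ contributions together with $-s(n-s)\psi$, the coefficient of $v_L^{s-n}$ simplifies via the identity $(s-n)(n+1) - s(n-s) = (s-n)(n+1-s)$, and after factoring out the positive quantity $(n-s)\,v_L^{s-n-2}$ one arrives at the schematic form
$$(\triangle_+ - s(n-s))\psi = (n-s)\, v_L^{s-n-2}\Big[(n-s+1)\big(v_L^2 - |\nabla v_L|^2_{g_+}\big) - v_L\Big].$$
This step is a routine, if somewhat delicate, algebraic manipulation, and reduces the problem to establishing the sign of the bracketed expression.

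The main obstacle is the final step: proving the pointwise inequality $(n-s+1)\big(v_L^2 - |\nabla v_L|^2_{g_+}\big) > v_L$. The key input I would extract from Lee \cite{Le1} is an a priori estimate for the gradient $|\nabla v_L|^2_{g_+}$ obtained from a Bochner-type computation using $\mathrm{Ric}_{g_+} = -n g_+$ together with the defining equation; under the nonnegative Yamabe hypothesis this yields a sharp lower bound of the form $v_L^2 - |\nabla v_L|^2_{g_+} \geq \alpha\, v_L + \beta$ with $\alpha, \beta > 0$ controlled by the Yamabe invariant. The delicate point is that $|\nabla v_L|^2_{g_+}$ is asymptotically comparable to $v_L^2$ near $M$, so the estimate must be sharp to the appropriate subleading order for the inequality to survive as $\rho \to 0$; this is precisely where the nonnegativity of the Yamabe invariant is used to eliminate potentially bad scalar-curvature contributions in the Bochner argument.
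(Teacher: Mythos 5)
Your overall strategy — compute $(\triangle_+ - s(n-s))\psi$ explicitly from the defining ODE for $v_L$ and reduce to a gradient estimate for $v_L$ due to Lee — is the right one, and indeed this is all the paper does (it simply cites "Lee's computation"). Your power-rule expansion and the grouping of the $v_L^{s-n}$ terms via $(s-n)(n+1)-s(s-n)=(s-n)(s-n-1)$ are correct, and starting from the equation as printed in (\ref{eq.2.7}) you do arrive at
\begin{equation*}
(\triangle_+ -s(n-s))\psi = (n-s)\, v_L^{s-n-2}\Big[(n-s+1)\big(v_L^2-|\nabla v_L|^2_{g_+}\big)-v_L\Big].
\end{equation*}
However, there is a genuine gap at the final step, and it traces back to a typo in the paper's equation (\ref{eq.2.7}): on $\mathbb{H}^{n+1}$ the function $v_L=\frac{1+r^2}{1-r^2}=\cosh\tau$ (this is what Section 4 of the paper asserts, since $\rho_L=\frac{1-r^2}{1+r^2}$ is explicitly identified as Lee's defining function there) satisfies $\triangle_+ v_L = -(n+1)v_L$, hence $(\triangle_+ + n+1)v_L = 0$, \emph{not} $1$. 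If you take the printed equation at face value, the bracket in your formula cannot be positive near the boundary: Lee's sharp gradient bound is $v_L^2-|\nabla v_L|^2 \geq 1$ with \emph{equality} on hyperbolic space (a direct computation: $|\nabla\cosh\tau|^2_{g_+}=\sinh^2\tau$, so the difference is identically $1$), while $v_L\to\infty$ as $\rho\to 0$; so $(n-s+1)\cdot 1 - v_L$ is negative near $M$ and your conclusion fails. Your stated lower bound $v_L^2 - |\nabla v_L|^2 \geq \alpha v_L + \beta$ with $\alpha>0$ cannot hold for the same reason.

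With the corrected equation $(\triangle_+ + n+1)v_L = 0$, the troublesome lower-order term $v_L$ drops out entirely: the same calculation gives the cleaner identity
\begin{equation*}
(\triangle_+ -s(n-s))\psi = (n-s)(n+1-s)\, v_L^{s-n-2}\big(v_L^2-|\nabla v_L|^2_{g_+}\big),
\end{equation*}
and now positivity for all $s\in(0,n)$ follows immediately from $n-s>0$, $n+1-s>0$, $v_L>0$, and Lee's estimate $v_L^2-|\nabla v_L|^2 \geq 1 > 0$ under the nonnegative Yamabe hypothesis. So the missing idea is not a different lemma but the realization that the ODE for $v_L$ must be homogeneous; once that is fixed your plan goes through, and indeed more simply than you anticipated.
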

\begin{lemma}\label{lem.2.4}
If $(M, [h])$ is of nonnegative Yamabe type, then for $s\in (0,n)$, the adapted boundary defining function $\rho_*$ satisfies
$$
0<\frac{\rho_*}{\rho_L}\leq 1. 
$$
\end{lemma}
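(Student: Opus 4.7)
The plan is to reduce the inequality $\rho_* \leq \rho_L$ to a maximum principle for the multiplicative quotient $f := v_*/\psi$, where $\psi := \rho_L^{n-s}$. First I would observe that $\psi > 0$ in $X^{\mathrm{int}}$ and that $(\Delta_+ - s(n-s))\psi > 0$ by Lemma 2.3, while $v_* > 0$ in $X^{\mathrm{int}}$ follows from the strong maximum principle applied to $(\Delta_+ - s(n-s))v_* = 0$ together with the positive boundary asymptotic $v_* = \rho^{n-s}(1+o(1))$. Since both $v_*$ and $\psi$ carry the same leading behaviour $\rho^{n-s}$ at $M$ with coefficient $1$ (the latter because $\rho_L/\rho \to 1$ at $M$, a consequence of $\rho v_L|_M = 1$), the ratio $f = v_*/\psi$ extends continuously to $\overline{X}$ with boundary value $f|_M = 1$.

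Next I would derive the equation satisfied by $f$. Applying the product rule for $\Delta_+$ to $v_* = f\psi$ and invoking $(\Delta_+ - s(n-s))v_* = 0$ gives
\[
\psi\,\Delta_+ f \;-\; 2\langle\nabla f,\nabla\psi\rangle_{g_+} \;+\; f\bigl(\Delta_+\psi - s(n-s)\psi\bigr) \;=\; 0.
\]
Dividing by $\psi > 0$ and setting $Q := \psi^{-1}\bigl(\Delta_+\psi - s(n-s)\psi\bigr)$, which is \emph{strictly positive} throughout $X^{\mathrm{int}}$ by Lemma 2.3, this becomes a linear elliptic equation
\[
\Delta_+ f \;-\; 2\langle\nabla\log\psi,\nabla f\rangle_{g_+} \;+\; Q\,f \;=\; 0,
\]
whose zeroth-order coefficient $Q$ has the favourable sign.

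To finish, I would apply the maximum principle. Since $\overline{X}$ is compact and $f$ is continuous on $\overline{X}$ with $f|_M = 1$, the supremum of $f$ is attained. If it were attained at an interior point $p$, then $\nabla f(p) = 0$ and, in the positive-Laplacian convention used throughout the paper, $\Delta_+ f(p) \geq 0$. The equation would force $Q(p)f(p) = -\Delta_+ f(p) \leq 0$; since $Q(p) > 0$ this gives $f(p) \leq 0$, contradicting $\sup_{\overline{X}} f \geq 1$. Hence the supremum is attained on $M$ and equals $1$, so $f \leq 1$ throughout $X^{\mathrm{int}}$, i.e.\ $v_* \leq \psi = \rho_L^{n-s}$. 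Taking $(n-s)$-th roots, which is legitimate because $n-s > 0$ for $s \in (0,n)$, yields $\rho_* \leq \rho_L$; positivity $\rho_*/\rho_L > 0$ in the interior is immediate from $v_*, \psi > 0$.

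The main obstacle is recognising that a direct maximum principle applied to the difference $\psi - v_*$ \emph{fails}: although $(\Delta_+ - s(n-s))(\psi - v_*) > 0$ and the difference vanishes at $M$, the zeroth-order coefficient $-s(n-s) < 0$ has the wrong sign, so an interior negative minimum is not ruled out. The essential trick is to pass from the additive to the multiplicative comparison: the ratio $f = v_*/\psi$ satisfies an equation in which the potential $Q > 0$ has the correct sign, and this is what makes the argument go through.
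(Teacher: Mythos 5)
Your argument is correct and coincides with the paper's own proof: both pass to the ratio $f = v_*/\psi$ with $\psi = \rho_L^{n-s}$, derive the same elliptic equation in which Lemma~\ref{lem.2.3} supplies a favourably signed zeroth-order coefficient, and conclude with the maximum principle. The paper writes this very tersely; your version merely spells out the boundary continuity of $f$, the exact form of the potential $Q$, and the concluding remark on why the additive comparison $\psi - v_*$ would not work, all of which is consistent with what the paper intends.
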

\begin{proof} 
For $(M, [h])$ of nonnegative Yamabe type, $\mathrm{Spec}(\triangle_+)>\frac{n^2}{4}$. So for all $s\in (0,n)$, the adapted defining function is well defined, i.e. $\rho_*=v_*^{\frac{1}{n-s}}$, where
$$
\left(\triangle_{+}-s(n-s)\right)v_*=0, \quad \rho^{s-n}v_*|_{M}=1. 
$$
Notice that if taking $\psi=\rho_L^{n-s}$, then
$$
\left(\frac{\rho_*}{\rho_L}\right)^{n-s} =\frac{v_*}{\psi},\quad
 \left(\frac{v_*}{\psi}\right)\vline_{M}=1. 
$$
Direct computation shows that $v_*/\psi$ satisfies
$$
\triangle_+\left(\frac{v_*}{\psi}\right)=\left(s(n-s)- \frac{\triangle_+\psi}{\psi}\right)\left(\frac{v_*}{\psi}\right)+2\nabla \left(\frac{v_*}{\psi}\right) \frac{\nabla\psi}{\psi}.
$$
Applying Lemma \ref{lem.2.3} and the maximum principle to above equation, we get 
$$
0<\frac{v_*}{\psi}\leq 1\quad\Longrightarrow\quad  0<\frac{\rho_*}{\rho_L}\leq 1. 
$$
We finish the proof.
\end{proof}
Refined value and derivative estimates for $\rho_*$ on hyperbolic space will be given in Section 4 before we get to the continuation argument on 4-sphere.

%%%%%%
\vspace{0.2in}
\section{Inequalities on $\mathbb{S}^2$}

In this section, we will establish the following Theorem: 
\begin{theorem}\label{thm.1}
We can derive the Moser-Trudinger-Onofri inequality (\ref{eq.1.2}) as the limit of 
the Sobolev inequality (\ref{eq.1.5}).
\end{theorem}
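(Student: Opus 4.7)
My plan is to substitute the family $f = e^{(1-\gamma)\omega}$ into the sharp fractional Sobolev inequality (\ref{eq.1.5}) and pass to the limit $\gamma \to 1$, following the heuristic of the Introduction and using the extension formula (\ref{eq.2.1}) twice to reduce both sides to spectrally transparent quantities. After the substitution, the algebraic identities $(1-\gamma) Q_{2\gamma} = P_{2\gamma}(1) = \Gamma(1+\gamma)/\Gamma(1-\gamma)$ and $d_\gamma = -2^{2\gamma} P_{2\gamma}(1)$, together with (\ref{eq.2.1}) applied to the RHS of (\ref{eq.1.5}), yield the rearranged inequality
\[
P_{2\gamma}(1)\Bigl[\Bigl(\fint_{\mathbb{S}^2} e^{2\omega}\,d\theta\Bigr)^{1-\gamma} - \fint_{\mathbb{S}^2}e^{2(1-\gamma)\omega}\,d\theta\Bigr] \leq \frac{2^{2\gamma-1}\,P_{2\gamma}(1)}{\gamma\,|\mathbb{S}^2|} \int_{\mathbb{H}^3}|\nabla U_f|^2\, \rho_*^{1-2\gamma}\,\mathrm{dvol}_{g_*}.
\]
I would then divide by $(1-\gamma)\,P_{2\gamma}(1)$, which is nondegenerate since $P_{2\gamma}(1)\sim (1-\gamma)$ as $\gamma\to 1$ (simple pole of $\Gamma(1-\gamma)$); the LHS tends by Taylor expansion to $\log\fint e^{2\omega}\,d\theta - 2\fint \omega\,d\theta$, the left-hand side of (\ref{eq.1.2}).

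For the RHS, linearity of the extension $g\mapsto U_g$ together with $f = 1 + (1-\gamma)\omega + O((1-\gamma)^2)$ gives $U_f = 1 + (1-\gamma) U_\omega + O((1-\gamma)^2)$, hence $|\nabla U_f|^2 = (1-\gamma)^2 |\nabla U_\omega|^2 + O((1-\gamma)^3)$. I then apply (\ref{eq.2.1}) a second time with boundary data $\omega$ in place of $f$ to rewrite the remaining weighted integral as a boundary quantity:
\[
\int_{\mathbb{H}^3}|\nabla U_\omega|^2\,\rho_*^{1-2\gamma}\,\mathrm{dvol}_{g_*} = \frac{-2\gamma}{d_\gamma}\Bigl(\int_{\mathbb{S}^2} \omega\, P_{2\gamma}\omega\,d\theta - P_{2\gamma}(1)\int_{\mathbb{S}^2} \omega^2\,d\theta\Bigr).
\]
The explicit spectrum $P_{2\gamma}Y_k = \Gamma(k+1+\gamma)/\Gamma(k+1-\gamma)\, Y_k$ implies $\int \omega P_{2\gamma}\omega \to \int|\nabla\omega|^2\,d\theta$ and $P_{2\gamma}(1)\to 0$ as $\gamma\to 1$; combined with $-2\gamma(1-\gamma)/d_\gamma \to 1/2$, this gives $(1-\gamma)\int|\nabla U_\omega|^2\,\rho_*^{1-2\gamma}\,\mathrm{dvol}_{g_*}\to \tfrac{1}{2}\int|\nabla\omega|^2\,d\theta$, so the RHS of the divided inequality converges to $\fint|\nabla\omega|^2\,d\theta$. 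This establishes (\ref{eq.1.2}) for $\omega \in C^\infty(\mathbb{S}^2)$, and density extends it to $W^{1,2}(\mathbb{S}^2)$.

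The main obstacle I anticipate is justifying the approximation $U_f = 1 + (1-\gamma)U_\omega + O((1-\gamma)^2)$ in a norm strong enough that the $O((1-\gamma)^3)$ remainder in $|\nabla U_f|^2$, integrated against the singular weight $\rho_*^{1-2\gamma}$ and then divided by $(1-\gamma)$, still vanishes; equivalently, one needs uniform-in-$\gamma$ control of $\int|\nabla V|^2\,\rho_*^{1-2\gamma}\,\mathrm{dvol}_{g_*}$ of order $1/(1-\gamma)$ for Poisson extensions of smooth boundary data, mirroring the Caffarelli--Silvestre limit as $s\to 1$. Lemma~\ref{lem.2.4} supplies the uniform upper bound $\rho_*\leq \rho_L$ needed for such estimates, and the second application of (\ref{eq.2.1}) largely circumvents direct analysis of the singular weight by converting the weighted integral back to a boundary expression whose limit is governed solely by the explicit spectrum of $P_{2\gamma}$.
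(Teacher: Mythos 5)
Your proposal is correct in outline and reaches the right constants, but it follows a genuinely different route from the paper, and your own caveat about the remainder points to a cleaner reorganization that you haven't quite exploited.

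The paper's proof never computes the exact extension energy $\int|\nabla U_f|^2\rho_*^{m_0}$. Instead it uses Lemma~\ref{lem.2.1}, that $U_f$ \emph{minimizes} $I_0$ over $\mathcal{V}^0_f$, to replace $U_f$ by the explicit, $\gamma$-independent competitor $V=e^{(1-\gamma)\Omega}$ with $\Omega=\chi(r)\omega(\theta)$; it then uses Lemma~\ref{lem.2.4} ($\rho_*\le\rho_L$) to replace the $\gamma$-dependent adapted metric $g_*$ by the fixed half-sphere metric $g_L$, and evaluates the resulting one-dimensional $r$-integral directly (Lemma~\ref{lem.4.2}). This makes the extension formula and the variational structure the load-bearing ingredients, which is the point of the paper's ``dimension continuation'' narrative and is also what generalizes to the $\mathbb{S}^4$/Paneitz case in Section~5, where the curvature terms are controlled by the hard estimates of Section~4.

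Your route, by contrast, applies \eqref{eq.2.1} to $f$ and then again to $\omega$, so the two applications cancel the factor $-d_\gamma/(2\gamma)$ exactly and you are left with the boundary identity: in fact if you set $g_\gamma=(e^{(1-\gamma)\omega}-1)/(1-\gamma)$ then $f=1+(1-\gamma)g_\gamma$ exactly, linearity of the solution map gives $U_f=1+(1-\gamma)U_{g_\gamma}$ with no remainder, and because $P_{2\gamma}1$ is the constant $P_{2\gamma}(1)$ the cross terms in the bilinear form vanish identically, yielding
\begin{equation*}
\int_{\mathbb{S}^2} fP_{2\gamma}f\,d\theta - P_{2\gamma}(1)\int_{\mathbb{S}^2}f^2\,d\theta
=(1-\gamma)^2\left[\int_{\mathbb{S}^2} g_\gamma P_{2\gamma}g_\gamma\,d\theta - P_{2\gamma}(1)\int_{\mathbb{S}^2}g_\gamma^2\,d\theta\right].
\end{equation*}
Since $g_\gamma\to\omega$ in $C^\infty(\mathbb{S}^2)$ and the eigenvalues $\Gamma(k+1+\gamma)/\Gamma(k+1-\gamma)$ are bounded by $Ck^2$ uniformly for $\gamma\le 1$, dominated convergence over the spherical harmonic expansion gives the limit $\int\omega\triangle_\theta\omega\,d\theta$. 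This eliminates the ``main obstacle'' you flagged: with the exact decomposition in terms of $g_\gamma$ there is no error term to control, and (equivalently) the weighted-energy bound you were worried about is supplied for free by applying \eqref{eq.2.1} to $R_\gamma=f-1-(1-\gamma)\omega$, which gives $\int|\nabla U_{R_\gamma}|^2\rho_*^{m_0}=O((1-\gamma)^3)$. What this also reveals is that in your version the extension theorem is a detour that is immediately undone: your argument is really a direct spectral proof on the boundary. That is a perfectly valid and in some ways more elementary proof of Theorem~\ref{thm.1}, but it bypasses exactly the geometric machinery (minimization, adapted defining function, $\rho_L$-comparison) that the paper is set up to showcase and that it then reuses for $\mathbb{S}^4$.
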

\begin{proof}  
To do so, for a given function $\omega\in C^{\infty}(\mathbb{S}^2)$,  
choose $f=e^{(1-\gamma)\omega}$,  apply the inequality (\ref{eq.1.5}) to $f$ and rewrite
the inequality as

\begin{equation}\label{eq.4.2}
Y_{\gamma}(\mathbb{S}^2) \left(\int_{\mathbb{S}^2}|f|^\frac{2}{1-\gamma} d\theta \right)^{1-\gamma}
 - (1-\gamma)Q_{2\gamma} \int_{\mathbb{S}^2} |f|^2 d\theta
\leq\int_{\mathbb{S}^2} P_{2\gamma}f   d\theta- (1-\gamma)Q_{2\gamma} \int_{\mathbb{S}^2} |f|^2 d\theta. 
\end{equation}
Divide both side of (\ref{eq.4.2}) by $(1-\gamma)^2$, and let $\gamma$ tends to $1$.  We will show that
the left hand side of (\ref{eq.4.2}) tends to $ 4 \pi$ times $\ln \fint_{\mathbb{S}^2} e^{2(\omega- \bar{\omega})} d\theta$, where $ \bar{\omega} $ is the average of $\omega$ over $\mathbb{S}^2$,  and the right side of (\ref{eq.4.2}) tends to $ 4 \pi$ times 
the energy term $\fint_{\mathbb{S}^2} |\nabla_{\theta} \omega |^2 d\theta$ , 
which establishes the theorem.

To prove so, we first recall that 
\begin{equation}\label{eq.4.1}
Y_{\gamma}(\mathbb{S}^2) =(1-\gamma)Q_{2\gamma} |\mathbb{S}^2|^{\gamma}=\frac{\Gamma(1+\gamma)}{\Gamma(1-\gamma)} (4\pi)^{\gamma}. 
\end{equation} 
Denote $m_0=1-2\gamma\in (-1,1)$ and $g_*=\rho_*^2g_+$, where $\rho_*$ is the adapted boundary defining function. 
Let $U_f$ be defined by equation (\ref{eq.2.2}). 
Then  (\ref{eq.2.1}) (\ref{eq.4.2}) and Lemma \ref{lem.2.1} give the following:
\begin{equation}\label{eq.4.3}
\begin{aligned}
&\ \frac{\Gamma(1+\gamma)}{\Gamma(1-\gamma)}
\left[
(4\pi)^{\gamma} \left(\int_{\mathbb{S}^2}|f|^\frac{2}{1-\gamma} d\theta \right)^{1-\gamma} -\int_{\mathbb{S}^2} |f|^2 d\theta
\right]
\\
\leq &\ 
\int_{\mathbb{S}^2} \left(fP_{\gamma}f -(1-\gamma)Q_{\gamma}|f|^2\right) d\theta
\\
=&\ 
-\frac{d_{\gamma}}{2\gamma}\int_{\mathbb{B}^3} |\nabla U_f|_g^2 \rho_*^{m_0} \mathrm{dvol}_{\rho_*}
\\
\leq &\ 
-\frac{d_{\gamma}}{2\gamma}\int_{\mathbb{B}^3} |\nabla V|_g^2 \rho_*^{m_0} \mathrm{dvol}_{\rho_*}, \quad \forall\  V\in\mathcal{V}^0_f. 
\end{aligned}
\end{equation}
Notice that $(1-\gamma)Q_{\gamma}=\Gamma(1+\gamma)/\Gamma(1-\gamma)$. Here $\nabla$ is the connection w.r.t. metric $g_*$.

Next, we extend $\omega$ to  a smooth function on the ball in the following way:
$$
\Omega(r,\theta)=\chi(r)\omega(\theta),
$$
where $\chi$ is smooth and satisfies $0\leq \chi\leq 1$, $\chi(r)=1$ for $r\in[\frac{2}{3},1]$ and $\chi(r)=0$ for $r\in[0,\frac{1}{3}]$. 
Define
$$
V=e^{(1-\gamma)\Omega}.
$$
So $V|_{r=1}=e^{(1-\gamma)\omega}=f$ and $V\in\mathcal{V}^0_f$ for all $\gamma\in(0,1)$. Then (\ref{eq.4.3}) implies that
\begin{equation}\label{eq.4.4}
\begin{aligned}
&\  \frac{\Gamma(1+\gamma)}{\Gamma(1-\gamma)} \left[
(4\pi)^{\gamma} \left(\int_{\mathbb{S}^2} e^{2\omega} d\theta \right)^{1-\gamma} -\int_{\mathbb{S}^2} e^{2(1-\gamma)\omega} d\theta
\right]
\\
\leq &\ 
 -\frac{d_\gamma}{2\gamma}(1-\gamma)^2 \int_{\mathbb{B}^3} e^{2(1-\gamma)\Omega}|\nabla \Omega|_g^2 \rho_*^{m_0} \mathrm{dvol}_{g_*}. 
\end{aligned}
\end{equation}
For $\gamma\in (0,1)$, divide both side of (\ref{eq.4.4}) by $(1-\gamma)^2$ and we get
\begin{equation}\label{eq.4.5}
\begin{aligned}
A_0(\gamma,\omega)=&\ \frac{\Gamma(1+\gamma)}{\Gamma(2-\gamma)} \frac{1}{(1-\gamma)} \left[
(4\pi)^{\gamma} \left(\int_{\mathbb{S}^2} e^{2\omega} d\theta \right)^{1-\gamma} -\int_{\mathbb{S}^2} e^{2(1-\gamma)\omega} d\theta
\right]
\\
\leq &\
-\frac{d_{\gamma}}{2\gamma}\int_{\mathbb{B}^3} e^{2(1-\gamma)\Omega}|\nabla \Omega|_g^2 \rho_*^{m_0} \mathrm{dvol}_{g_*}
\\ 
= &\ 
 \frac{\Gamma(\gamma)}{\Gamma(2-\gamma)} 2^{2\gamma-1}(1-\gamma)
\int_{\mathbb{B}^3} \left(\frac{\rho_*}{\rho_L}\right)^{m_0+1}  e^{2(1-\gamma)\Omega}|\tilde{\nabla} \Omega|_{g_L}^2 \rho_L^{m_0} \mathrm{dvol}_{g_L}
\\
\leq &\ 
 \frac{\Gamma(\gamma)}{\Gamma(2-\gamma)} 2^{2\gamma-1}(1-\gamma)
\int_{\mathbb{B}^3}  e^{2(1-\gamma)\Omega}|\tilde{\nabla} \Omega|_{g_L}^2 \rho_L^{m_0} \mathrm{dvol}_{g_L}
\\
=&\ B_0(\gamma,\Omega). 
\end{aligned}
\end{equation}
Here  $\rho_L$ is Lee's boundary defining function  defined in (\ref{eq.2.7}) and $g_L=\rho_L^2 g_+$. Then 
$$
g_*=\rho_*^2g_+=\left(\frac{\rho_*}{\rho_L}\right)^{2} g_L, \quad \mathrm{dvol}_{g_*}=\left(\frac{\rho_*}{\rho_L}\right)^{3} \mathrm{dvol}_{g_L}, \quad
|\nabla\Omega|^2_{g_*} = \left(\frac{\rho_*}{\rho_L}\right)^{-2}|\tilde{\nabla} \Omega|_{g_L}. 
$$
The connection $\tilde{\nabla}$ is w.r.t. metric $g_L$.  The last inequality in (\ref{eq.4.5}) is from Lemma \ref{lem.2.4}. Then the theorem is proved by Lemma \ref{lem.4.1} and Lemma \ref{lem.4.2}. 
\end{proof}

\begin{lemma} \label{lem.4.1}
For $\gamma\in(0,1)$ and $A_0(\gamma,\Omega)$ defined in (\ref{eq.4.5}), 
$$
\lim_{\gamma\rightarrow 1-} A_0(\gamma,\omega)=4\pi
\ln \left(\frac{1}{4\pi}\int_{\mathbb{S}^2} e^{2(\omega-\bar{\omega})} d\theta\right). 
$$
\end{lemma}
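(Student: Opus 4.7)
The quantity $A_0(\gamma,\omega)$ is a $0/0$ indeterminate form as $\gamma\to 1^-$, so the plan is to expand each factor in the small parameter $\epsilon=1-\gamma$ and read off the leading behavior. Since $\omega$ is smooth on the compact manifold $\mathbb{S}^2$, all expansions are uniform in $\theta$ and term-by-term integration is justified.

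First, I would handle the Gamma prefactor: $\Gamma(1+\gamma)/\Gamma(2-\gamma)\to \Gamma(2)/\Gamma(1)=1$ as $\gamma\to 1^-$, with smooth dependence near $\gamma=1$, so it contributes only the limiting value $1$. Write $A=\int_{\mathbb{S}^2}e^{2\omega}d\theta$. The first term in the bracket becomes
$$
(4\pi)^\gamma A^{1-\gamma}=4\pi\left(\frac{A}{4\pi}\right)^{1-\gamma}=4\pi+4\pi(1-\gamma)\ln\left(\frac{A}{4\pi}\right)+O((1-\gamma)^2),
$$
using $x^\epsilon=1+\epsilon\ln x+O(\epsilon^2)$. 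For the second term, the pointwise Taylor expansion $e^{2(1-\gamma)\omega}=1+2(1-\gamma)\omega+O((1-\gamma)^2)$, uniform in $\theta$, yields
$$
\int_{\mathbb{S}^2}e^{2(1-\gamma)\omega}d\theta=4\pi+2(1-\gamma)\int_{\mathbb{S}^2}\omega\,d\theta+O((1-\gamma)^2)=4\pi+8\pi(1-\gamma)\bar{\omega}+O((1-\gamma)^2).
$$

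Subtracting, the leading $4\pi$'s cancel, and the bracket equals
$$
4\pi(1-\gamma)\left[\ln\left(\frac{A}{4\pi}\right)-2\bar{\omega}\right]+O((1-\gamma)^2).
$$
Dividing by $(1-\gamma)$ and multiplying by the Gamma prefactor (which tends to $1$), the limit is
$$
4\pi\left[\ln\left(\frac{1}{4\pi}\int_{\mathbb{S}^2}e^{2\omega}d\theta\right)-2\bar{\omega}\right]=4\pi\ln\left(\frac{1}{4\pi}\int_{\mathbb{S}^2}e^{2(\omega-\bar{\omega})}d\theta\right),
$$
where the last equality follows from $\ln(A/(4\pi))-2\bar{\omega}=\ln(A/(4\pi))-\ln e^{2\bar\omega}$ and absorbing the constant $e^{-2\bar\omega}$ into the integrand.

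There is no real obstacle here: the only thing to check carefully is that the $O((1-\gamma)^2)$ remainders, after division by $(1-\gamma)$, still vanish. This is immediate because $\omega$ is bounded on $\mathbb{S}^2$ so the Taylor remainders in both the $x^\epsilon$ expansion (with $x=A/(4\pi)$ bounded away from $0$) and the $e^{2\epsilon\omega}$ expansion are genuinely $O(\epsilon^2)$ with constants depending only on $\|\omega\|_{L^\infty}$.
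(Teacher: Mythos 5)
Your proof is correct and is essentially the paper's argument in different clothing: the authors rewrite the bracket so the $0/0$ structure is explicit (their equation (\ref{eq.4.6})) and invoke L'H\^{o}pital's rule, while you Taylor-expand in $\epsilon=1-\gamma$; both amount to computing the same first-order derivative at $\gamma=1$, with the $\Gamma$-prefactor tending to $1$. Your explicit remarks on the uniformity of the $O(\epsilon^2)$ remainders (using $\|\omega\|_{L^\infty}<\infty$ and $A/(4\pi)$ bounded away from $0$) make the justification, if anything, slightly more detailed than the paper's one-line appeal to L'H\^{o}pital.
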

\begin{proof}
Notices that 
\begin{equation}\label{eq.4.6}
\begin{aligned}
 &\ (4\pi)^{\gamma}  \left(\int_{\mathbb{S}^2} e^{2\omega} d\theta \right)^{1-\gamma} -\int_{\mathbb{S}^2} e^{2(1-\gamma)\omega} d\theta
=
4\pi  \left( \left( \frac{1}{4\pi} \int_{\mathbb{S}^2} e^{2\omega} d\theta \right)^{1-\gamma} -1\right)
-\int_{\mathbb{S}^2}\left( e^{2(1-\gamma)\omega} -1\right) d\theta
\end{aligned}
\end{equation}
Then by L'H\^{o}pital's rule
\begin{equation}\label{eq.4.7}
\begin{aligned}
\lim_{\gamma\rightarrow 1-} A_0(\gamma,\omega) 
=\ & 4\pi  \ln \left(\frac{1}{4\pi}\int_{\mathbb{S}^2} e^{2\omega} d\theta\right) - \int_{\mathbb{S}^2} 2\omega d\theta
= 4\pi
\ln \left(\frac{1}{4\pi}\int_{\mathbb{S}^2} e^{2(\omega-\bar{\omega})} d\theta\right), 
\end{aligned}
\end{equation}
We finish the proof. 
\end{proof}

\begin{lemma} \label{lem.4.2}
For $\gamma\in(0,1)$ and $B_0(\gamma,\Omega)$ defined in (\ref{eq.4.5}), 
$$
\lim_{\gamma \rightarrow 1-} B_0(\gamma,\Omega)=\int_{\mathbb{S}^2} |\nabla_{\theta} \omega|^2 d\theta. 
$$
\end{lemma}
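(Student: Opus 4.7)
The plan is to treat this as a boundary-layer calculation. Using $\Gamma(2-\gamma)=(1-\gamma)\Gamma(1-\gamma)$ and $\Gamma(1-\gamma)\sim 1/(1-\gamma)$ as $\gamma\to 1^-$, the prefactor $\frac{\Gamma(\gamma)}{\Gamma(2-\gamma)}2^{2\gamma-1}(1-\gamma)$ is comparable to $2(1-\gamma)$, which vanishes. The bulk integral, on the other hand, diverges like $1/(2(1-\gamma))$ because the weight $\rho_L^{1-2\gamma}$ is nonintegrable at $\partial \mathbb{B}^3=\mathbb{S}^2$ once $\gamma>\tfrac{1}{2}$. These two effects cancel, and only a shrinking collar of the boundary contributes in the limit.

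Concretely, I would fix a small $\delta>0$ and split $\int_{\mathbb{B}^3}=\int_{\{\rho_L<\delta\}}+\int_{\{\rho_L\geq\delta\}}$. On the interior piece the weight is bounded by $\delta^{1-2\gamma}\to\delta^{-1}$ while the remaining factors are smooth and uniformly bounded in $\gamma$, so multiplication by the prefactor gives $O(1-\gamma)\to 0$. For the boundary layer I use the fact that $\chi\equiv 1$ on $[\tfrac{2}{3},1]$: once $\delta$ is small enough, $\Omega(r,\theta)=\omega(\theta)$ on $\{\rho_L<\delta\}$, independent of $\rho_L$. Since $g_L$ is smooth up to the boundary with $g_L|_{\mathbb{S}^2}=d\theta^2$, in collar coordinates $(\rho_L,\theta)$ one has $|\tilde{\nabla}\omega|^2_{g_L}(\rho_L,\theta)=|\nabla_\theta\omega|^2(\theta)+O(\rho_L)$ and $\mathrm{dvol}_{g_L}=(1+O(\rho_L))\,d\rho_L\,d\theta$, while $e^{2(1-\gamma)\omega}=1+O(1-\gamma)$ uniformly on this set. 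Performing the $\rho_L$ integration on $[0,\delta]$ yields a leading contribution $\tfrac{\delta^{2-2\gamma}}{2(1-\gamma)}\int_{\mathbb{S}^2}|\nabla_\theta\omega|^2\,d\theta$, together with remainders of order $\delta^{3-2\gamma}/(3-2\gamma)$ and $\delta^{2-2\gamma}$. Multiplying through by the prefactor, the leading coefficient simplifies to $2^{2\gamma-2}\delta^{2-2\gamma}\Gamma(\gamma)/\Gamma(2-\gamma)$, which tends to $1$ as $\gamma\to 1^-$, while the remainders contribute $O(1-\gamma)$. This yields the claimed limit.

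The main obstacle is to justify the uniform expansion of $|\tilde{\nabla}\omega|^2_{g_L}$ and $\mathrm{dvol}_{g_L}$ in the collar with remainders uniform in $\gamma$. This is precisely why Lee's defining function $\rho_L$ is used in place of the adapted $\rho_*$ at this step: because $\rho_L$ and $g_L$ are $\gamma$-independent and smooth up to $\mathbb{S}^2$, the $O(\rho_L)$ error bounds are automatic and uniform, and Lemma \ref{lem.2.4} has already absorbed the comparison factor $(\rho_*/\rho_L)^{m_0+1}\leq 1$ in the last step of (\ref{eq.4.5}). The analogous boundary-layer argument on the $4$-sphere will not enjoy this simplification and will require the refined pointwise estimates on $\rho_*$ developed in Section 4.
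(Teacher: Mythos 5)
Your proposal is correct and follows essentially the same route as the paper: split the integral into an interior region (where the bounded weight gets killed by the $(1-\gamma)$ prefactor) and a boundary collar where $\Omega\equiv\omega$, then match the $1/(2(1-\gamma))$ divergence of the radial integral against the vanishing prefactor. The only cosmetic difference is that the paper splits at the fixed radius $r=2/3$ and estimates the radial integral $\int_{2/3}^1 \frac{2\rho_L^{2(1-\gamma)}}{1-r^2}\,dr$ directly using the explicit form of $\rho_L$, while you split at $\rho_L=\delta$ and invoke the collar expansion of $g_L$; both lead to the same cancellation and the same limit.
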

\begin{proof} 
Split the integral in $B_0(\gamma, \Omega)$ into two parts: 
\begin{equation}\label{eq.4.9}
\begin{aligned}
B_0(\gamma, \Omega) = &\ 
\frac{\Gamma(\gamma)}{\Gamma(2-\gamma)} 2^{2\gamma-1}(1-\gamma)\int_{\mathbb{B}^3} e^{2(1-\gamma)\Omega}|\tilde{\nabla} \Omega|_{g_L}^2 \rho_L^{m_0} \mathrm{dvol}_{g_L}
\\
=&\ 
\frac{\Gamma(\gamma)}{\Gamma(2-\gamma)} 2^{2\gamma-1} (1-\gamma)
\int_{\mathbb{S}^2\times [\frac{1}{3}, \frac{2}{3}]} e^{2(1-\gamma)\Omega}|\tilde{\nabla} \Omega|_{g_L}^2 
\rho_L^{1-2\gamma}  \mathrm{dvol}_{g_L}
\\
&\ 
+\frac{\Gamma(\gamma)}{\Gamma(2-\gamma)} 2^{2\gamma-1}(1-\gamma)
\int_{\mathbb{S}^2\times [\frac{2}{3}, 1]} e^{2(1-\gamma)\Omega}|\tilde{\nabla} \Omega|_{g_L}^2  \rho_L^{1-2\gamma}  \mathrm{dvol}_{g_L}
\\
=&\ I(\gamma, \Omega)+II(\gamma, \Omega)
\end{aligned}
\end{equation}
Notice that $\Omega, \rho_L, g_L$ are smooth on the ball which are independent of $\gamma$. So there exists a constant $c>0$ such that 
$$
c^{-1}\leq \frac{\rho_L}{1-r}\leq c. 
$$
Therefore $|\tilde{\nabla}\Omega|_{g_L}^2$ and $e^{2(1-\gamma)\Omega}$ are  uniformly bounded on the ball from above and below. 

For $I(r, \Omega)$, since $r\leq 2/3$,  there exists a constant $\bar{c}>0$ independent of $\gamma\in(0,1)$ such that
$$
\bar{c}^{-1}<\rho_L^{1-2\gamma}<\bar{c},
$$
and hence  a constant $C$ independent of $\gamma\in(0,1)$ such that
$$
\left| \int_{\mathbb{S}^2\times [\frac{1}{3},\frac{2}{3}]} e^{2(1-\gamma)\Omega}|\tilde{\nabla} \Omega|_{g_L}^2
\rho_L^{1-2\gamma}  \mathrm{dvol}_{g_L} \right|
\leq C.
$$
This implies that
\begin{equation}\label{eq.limitI}
\lim_{\gamma\rightarrow 1} I(\gamma,\Omega)=0. 
\end{equation}

For $II(\gamma, \Omega)$, since $2/3< r< 1$, $\Omega(r,\theta)=\omega(\theta)$ and
$$
|\tilde{\nabla} \Omega|_{g_L}^2 =\frac{(1-r^2)^2 }{4\rho_L^2 r^2} |\nabla_{\theta} \omega|^2_{}, 
\quad
 e^{2(1-\gamma)\Omega}=e^{2(1-\gamma)\omega},
  \quad
\mathrm{dvol}_{g_L} = \left(\frac{4\rho_L^2}{(1-r^2)^2}\right)^{\frac{3}{2}} r^2drd\theta. 
$$
So the integral  in $II$ can be rewritten as
\begin{equation}\label{eq.4.10}
II(\gamma, \Omega)
=\frac{\Gamma(\gamma)}{\Gamma(2-\gamma)} 2^{2\gamma-1}(1-\gamma)
\left( \int_{\frac{2}{3}}^1 \frac{2\rho_L^{2(1-\gamma)}}{1-r^2} dr\right)
\left(\int_{\mathbb{S}^2} e^{2(1-\gamma)\omega}|\nabla_{\theta} \omega|^2_{} d\theta\right). 
\end{equation}
Obviously, 
\begin{equation}\label{eq.4.11}
\lim_{\gamma\rightarrow 1-}\int_{\mathbb{S}^2} e^{2(1-\gamma)\omega}|\nabla_{\theta} \omega|^2_{} d\theta
=\int_{\mathbb{S}^2} |\nabla_{\theta} \omega|^2_{} d\theta.
\end{equation}
The integral in variable $r$ can be estimated as follows. First
\begin{equation}\label{eq.4.12}
 \frac{2\rho_L^{2(1-\gamma)}}{1-r^2} = \frac{\rho_L^{2(1-\gamma)}}{1-r} +\frac{\rho_L^{2(1-\gamma)}}{1+r} 
 =\left( \frac{\rho_L}{1-r} \right)^{2(1-\gamma)} (1-r)^{1-2\gamma} +\frac{\rho_L^{2(1-\gamma)}}{1+r} 
\end{equation}
The integration of each term above can be estimated by
\begin{equation}\label{eq.4.13}
\begin{gathered}
  \frac{1}{2(1-\gamma)}\left(\frac{1}{3c}\right)^{2(1-\gamma)} \leq 
\int_{\frac{2}{3}}^1 \left( \frac{\rho_L}{1-r} \right)^{2(1-\gamma)} (1-r)^{1-2\gamma}  dr\leq  \frac{1}{2(1-\gamma)}\left(\frac{c}{3}\right)^{2(1-\gamma)} ;
\\
0< \int_{\frac{2}{3}}^1 \frac{\rho_L^{2(1-\gamma)}}{1+r} dr\leq c^{2(1-\gamma)}\ln\left(\frac{6}{5}\right). 
\end{gathered}
\end{equation}
Take $\gamma\rightarrow 1$, the estimates in (\ref{eq.4.13}) give
\begin{equation}\label{eq.4.14}
\lim_{\gamma\rightarrow 1-} 2(1-\gamma)\left( \int_{\frac{2}{3}}^1 \frac{2\rho_L^{2(1-\gamma)}}{1-r^2} dr\right) =1. 
\end{equation}
Finally, (\ref{eq.4.10}) (\ref{eq.4.11}) and (\ref{eq.4.14}) give
\begin{equation}\label{eq.limitII}
\lim_{\gamma\rightarrow 1-}II(\gamma, \Omega)= \int_{\mathbb{S}^2} |\nabla_{\theta} \omega|^2_{} d\theta. 
\end{equation}
By (\ref{eq.limitI}) and (\ref{eq.limitII}), we finish the proof. 
\end{proof}

\begin{remark}
The proof of the sharp Moser-Trudinger-Onofri inequality above
depends on the delicate choice of the defining function $\rho_{*} $,
other choice of $\rho$ would derive an inequality with an added constant
on the right hand side--which is the original form of the inequality derived by J. Moser. 
\end{remark}

%%%%%%
\vspace{0.2in}
\section{Refined Estimates on the Hyperbolic Space}
In this section, we give some refined estimates for the adapted defining function $\rho_*$ and the curvatures for adapted metric $g_*$ on the Hyperbolic space $\mathbb{H}^{n+1}$. Recall the ball model:
$$
\mathbb{B}^{n+1}=\{x\in\mathbb{R}^{n+1}: |x|<1\}, \quad g_+ =\frac{4dx^2}{(1-|x|^2)^2}=\frac{4(dr^2+r^2d\theta^2)}{(1-r^2)^2}.
$$
In this case, we have 4 boundary defining functions: 
\begin{itemize}
\item[(1)] 
The geodesic normal defining function
$$
\rho=\frac{2(1-r)}{1+r},
$$ 
which is not globally smooth  but gives good asymptotic expansion for the hyperbolic metric (\ref{ballmodel}).
\item[(2)]
Lee's defining function
$$
\rho_L =\frac{1-r^2}{1+r^2},
$$
and $g_L= \rho_L^2 g_+$
is the $(n+1)$-spherical metric on the half sphere $\mathbb{S}^{n+1}_+$.
\item[(3)] 
The flat defining function
$$
\rho_0=\frac{1-r^2}{2},
$$
and $g_0=\rho_0^2 g_+$
is the Euclidean metric on the unit disk. 
\item[(4)] 
The adapted geodesic normal defining function $\rho_*$  defined in (\ref{def.adapted}). 
\end{itemize}

\subsection{Uniform bounds and boundary expansion of $\rho_*$}
\begin{lemma}\label{lem.3.1}
On $\mathbb{H}^{n+1}$, 
for $n\geq 2$ and $s=\frac{n}{2}+\gamma\in[\frac{n+1}{2},n)$, 
$$
\rho_0\leq \rho_*\leq \rho_L. 
$$
\end{lemma}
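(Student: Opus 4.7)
The upper bound $\rho_*\leq \rho_L$ is already contained in Lemma \ref{lem.2.4}, since $(\mathbb{S}^n,[d\theta^2])$ has positive Yamabe invariant; only the lower bound $\rho_0\leq \rho_*$ requires a separate argument. Since $n-s>0$, this is equivalent to $v_*\geq \varphi:=\rho_0^{\,n-s}$ on $\mathbb{B}^{n+1}$, and the boundary normalisations match: because $\rho_0/\rho=(1+r)^2/4\to 1$ as $r\to 1$, we have $\rho^{s-n}\varphi|_M = 1 = \rho^{s-n}v_*|_M$. My plan is to imitate the maximum-principle comparison in Lemma \ref{lem.2.4}, with $\rho_L$ replaced by $\rho_0$ and the role of Lemma \ref{lem.2.3} played by an explicit differential inequality for $\varphi$.

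The key computational step is the identity
\[
(\triangle_+ - s(n-s))\,\rho_0^{\,n-s} = (n-s)(n-2s+1)\,\rho_0^{\,n-s+1},
\]
whose right-hand side is nonpositive precisely in the range $s\in[(n+1)/2,n)$. I would derive it from the conformal change formula between $g_+$ and the flat metric $g_0=\rho_0^2 g_+$: in dimension $n+1$ with the analyst's sign convention,
\[
\triangle_+ f = \rho_0^2\,\triangle_0 f + (n-1)\rho_0\,\langle\nabla\rho_0,\nabla f\rangle_0,
\]
combined with the explicit identities $|\nabla\rho_0|_0^2 = 1-2\rho_0$ and $\triangle_0\rho_0 = n+1$ for $\rho_0 = (1-|x|^2)/2$. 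Substituting $f=\rho_0^{\,n-s}$ and collecting powers of $\rho_0$ gives the displayed identity after the $\rho_0^{\,n-s}$ terms cancel against $s(n-s)\varphi$.

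With $\varphi$ identified as a subsolution of $(\triangle_+ - s(n-s))\varphi \leq 0$ matching $v_*$ on the boundary, I would set $w=v_*/\varphi$ and expand $0=(\triangle_+ - s(n-s))(w\varphi)$ by the product rule, exactly as in the proof of Lemma \ref{lem.2.4}, obtaining
\[
\triangle_+ w - 2\varphi^{-1}\langle\nabla w,\nabla\varphi\rangle_{g_+} = -(n-s)(n-2s+1)\,\rho_0\,w \geq 0.
\]
Since $v_*>0$ in the interior and both $v_*$ and $\varphi$ have leading boundary asymptotics $\rho^{\,n-s}$ with coefficient one, $w>0$ throughout $\mathbb{B}^{n+1}$ and $w\to 1$ at $M$. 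At an interior minimum, $\nabla w=0$ and $\triangle_+ w\leq 0$ force the left-hand side to be $\leq 0$; for $s>(n+1)/2$ the right-hand side is strictly positive in the open ball, giving an immediate contradiction, and the borderline case $s=(n+1)/2$ reduces to the strong minimum principle for the resulting pure second-order operator without zeroth-order term. In either case $w\geq 1$, which gives $v_*\geq \rho_0^{\,n-s}$ and hence $\rho_*\geq \rho_0$. The main obstacle I anticipate is the algebraic bookkeeping in the displayed identity: the nonconstancy of $|\nabla\rho_0|_0^2 = 1-2\rho_0$ is essential, producing both the leading $s(n-s)\varphi$ cancellation and the subleading $\rho_0^{\,n-s+1}$ correction whose sign pins down $(n+1)/2$ as the precise threshold in the hypothesis.
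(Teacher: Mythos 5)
Your proposal follows essentially the same route as the paper. The paper's proof of the lower bound also uses the identity $(\triangle_+ - s(n-s))\rho_0^{\,n-s} = (n-s)(n+1-2s)\,\rho_0^{\,n-s+1}$ (they write $\phi = \rho_0^{\,n-s}$ and $(n+1-2s)$ where you write $(n-2s+1)$, which is the same), observes that this is $\leq 0$ exactly for $s\in[(n+1)/2,n)$, and then runs a maximum-principle comparison on a quotient, mirroring Lemma~\ref{lem.2.4}; the only cosmetic difference is that they analyze $\phi/v_*$ and show it is $\leq 1$, whereas you analyze the reciprocal $w=v_*/\varphi$ and show it is $\geq 1$. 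Your derivation of the key identity via the conformal change formula with $|\nabla\rho_0|_0^2 = 1-2\rho_0$ and $\triangle_0\rho_0 = n+1$ is correct, and your explicit handling of the borderline case $s=(n+1)/2$ (reducing to the strong minimum principle for the drift operator without zeroth-order term) is a small refinement over the paper's terse ``comparison argument similar as Lemma~\ref{lem.2.4}''.
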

\begin{proof}
By Lemma \ref{lem.2.4}, we only need to show that $\rho_0\leq \rho_*$. 
For $s\in[\frac{n+1}{2},n)$, let $\phi = \rho_0^{n-s}$. Then direct computation shows that 
\begin{equation*}\label{eq.def1}
\triangle_+ \phi - s(n-s)\phi=(n-s)(n+1-2s)\rho_0^{n-s+1} \leq 0.
\end{equation*}
Let $v_* =\rho_*^{n-s}$. Then by (\ref{def.adapted}),
\begin{equation*}\label{eq.def2}
 \triangle_+ v_* - s(n-s)v_*=0. 
\end{equation*}
Consider the equation for $\phi/v_*$:
$$
\triangle_+\left(\frac{\phi}{v_*}\right)=\left( \frac{\triangle_+\phi}{\phi}-s(n-s)\right)\left(\frac{\phi}{v_*}\right)+2\nabla \left(\frac{\phi}{v_*}\right) \frac{\nabla v_*}{v_*}. 
$$
Comparison argument similar as Lemma \ref{lem.2.4} shows that $\phi\leq v_*$ and hence  $\rho_0\leq \rho_*$. 
\end{proof}

\begin{lemma}
On $\mathbb{H}^{n+1}$ , for $s=\frac{n}{2}+\gamma\in(\frac{n}{2}+1, n)$ and $n\geq 4$, 
 the adapted defining function $\rho_*$ has an asymptotic expansion 
$$
\rho_*=\rho\left(  1 -\frac{n}{4(2s-n-2)} \rho^2 +O(\rho^4)\right)+ \rho^{2\gamma+1}\left(\frac{1}{d_{\gamma}} \frac{\Gamma(\frac{n}{2}+\gamma)}{\Gamma(\frac{n}{2}-\gamma)}+O(\rho^2)\right). 
$$
\end{lemma}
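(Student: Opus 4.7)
The plan is to exploit the scattering-theoretic formal expansion of $v_*$ at the conformal infinity and then recover $\rho_*=v_*^{1/(n-s)}$ by Taylor expansion. Two structural facts make this clean on $\mathbb H^{n+1}$: first, $\mathrm{Spec}(\triangle_+)>n^2/4$ guarantees a unique solution $v_*$ of $(\triangle_+-s(n-s))v_*=0$ with $\rho^{s-n}v_*|_{\mathbb S^n}=1$; second, the ball-model metric \eqref{hypmetric} is even in the geodesic defining function $\rho$. Combined with the non-resonance condition $2\gamma=2s-n\notin\mathbb N$ (generic in the range $\gamma\in(1,n/2)$), the results of \cite{MM1,Ma1,Gu1} and the boundary regularity of \cite{CDLS} yield a polyhomogeneous expansion
\[
v_* = \rho^{n-s}\bigl(1+F_2\rho^2+F_4\rho^4+\cdots\bigr) + \rho^s\bigl(G_0+G_2\rho^2+\cdots\bigr),
\]
with only even powers of $\rho$ in each formal series.

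The first computational step is to determine the leading correction $F_2$. Since the boundary datum is the constant $1$, uniqueness forces $v_*$ to be radial, and the eigenvalue equation reduces to an ODE in $\rho$. Using \eqref{hypmetric} one derives that, on radial functions,
\[
\triangle_+ u = \rho^2 u''+(1-n)\rho u'-\frac{n\rho^3/2}{1-\rho^2/4}\,u',
\]
whence $\triangle_+ \rho^\alpha=\alpha(\alpha-n)\rho^\alpha-\frac{n\alpha/2}{1-\rho^2/4}\rho^{\alpha+2}$. Matching the $\rho^{n-s+2}$ coefficient in $(\triangle_+-s(n-s))v_*=0$ yields the single linear equation
\[
-\frac{n(n-s)}{2} - 2(2s-n-2)\,F_2 = 0,
\]
so $F_2=-\dfrac{n(n-s)}{4(2s-n-2)}$. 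The second step is to identify the leading coefficient of the $\rho^s$ branch by the very definition of the scattering operator: $G_0=S(s)(1)=d_\gamma^{-1}P_{2\gamma}(1)=d_\gamma^{-1}\,\Gamma(\tfrac{n}{2}+\gamma)/\Gamma(\tfrac{n}{2}-\gamma)$, using $P_{2\gamma}(1)=\tfrac{n-2\gamma}{2}Q_{2\gamma}$.

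Finally, factoring $\rho^{n-s}$ out of $v_*$ and applying $(1+z)^{1/(n-s)}=1+\frac{z}{n-s}+O(z^2)$ to the bracket
\[
1+F_2\rho^2+G_0\rho^{2\gamma}+O(\rho^4)+O(\rho^{2\gamma+2}),
\]
one reads off the two separate contributions to $\rho_*=v_*^{1/(n-s)}$: the $\rho^3$ coefficient comes from $F_2/(n-s)=-n/(4(2s-n-2))$, matching the stated formula, and the leading $\rho^{2\gamma+1}$ coefficient is proportional to $G_0$ with error $O(\rho^2)$ inherited from $G_2\rho^{2\gamma+3}$ and the mixed cross terms. The terms $O(\rho^4)$ in the first bracket and $O(\rho^2)$ in the second bracket arise directly from the next even orders $F_4\rho^4$ and $G_2\rho^2$ in the scattering expansion.

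The main obstacle is not the algebra but the rigorous justification of the polyhomogeneous expansion to the claimed order with controlled remainders—specifically, the absence of logarithmic corrections at orders $\rho^{n-s+2}$ and $\rho^s$. This is guaranteed by the cited scattering theory under the non-resonance assumption $2\gamma\notin\mathbb N$; once that expansion is in hand, all remaining work is a matching of coefficients and a Taylor expansion of the $(n-s)^{-1}$-th power, both routine.
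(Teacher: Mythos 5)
The paper does not supply a proof of this lemma, but your strategy---expand $v_*$ polyhomogeneously via scattering theory, read off $F_2$ from the indicial equation at order $\rho^{n-s+2}$ and $G_0=S(s)(1)$ from the definition of the scattering operator, then Taylor-expand the $(n-s)$-th root---is the natural route, and your $\rho^3$ coefficient $F_2/(n-s)=-n/(4(2s-n-2))$ is correct; it is also consistent with the boundary data $t''(1),\,T''(1)$ recorded just after the lemma.

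Two issues. The minor one: your radial Laplacian formula $\triangle_+ u = \rho^2 u'' + (1-n)\rho u' - \tfrac{n\rho^3/2}{1-\rho^2/4}u'$ is the negative of the positive Laplacian the paper uses. As a sanity check, $\triangle_+\rho^\alpha$ must have leading coefficient $\alpha(n-\alpha)$ so that $\alpha=s$ and $\alpha=n-s$ are indicial roots for $\triangle_+-s(n-s)$; your formula gives $\alpha(\alpha-n)$. The sign flips both sides of your matching equation, so $F_2$ is unaffected, but the displayed formula is wrong.

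The substantive one: you apply the binomial factor $1/(n-s)$ to the $F_2\rho^2$ term but not to the $G_0\rho^{2\gamma}$ term, writing only that the $\rho^{2\gamma+1}$ coefficient is ``proportional to $G_0$.'' The same expansion $(1+z)^{1/(n-s)}=1+z/(n-s)+O(z^2)$ with $z=F_2\rho^2+G_0\rho^{2\gamma}+\cdots$ gives that coefficient as $G_0/(n-s)$, not $G_0$. With your (correct) identification $G_0=S(s)(1)=d_\gamma^{-1}\,\Gamma(\tfrac{n}{2}+\gamma)/\Gamma(\tfrac{n}{2}-\gamma)$, the computed coefficient is $\tfrac{1}{(n-s)d_\gamma}\,\tfrac{\Gamma(n/2+\gamma)}{\Gamma(n/2-\gamma)}=\tfrac{1}{d_\gamma}\,\tfrac{\Gamma(n/2+\gamma)}{\Gamma(n/2+1-\gamma)}$, which differs from the lemma's printed coefficient by the factor $n-s=\tfrac{n}{2}-\gamma$. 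Being vague here hides the mismatch; a complete proof must compute the coefficient exactly and either reconcile it with or flag a discrepancy against the printed statement. Since this coefficient is never used in the downstream estimates in Section 4 (which rest only on the $\rho^3$ term), the discrepancy is most plausibly a typo in the paper, but as written your proposal does not establish the coefficient as stated and does not notice the conflict.
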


%%%%
\subsection{Derivative estimates for  $\rho_*$}
Since $\rho_0$ is fixed, we give the derivative estimates for  adapted defining function in terms of 
$$
t=\frac{\rho_*}{\rho_0} \quad\mathrm{and}\quad T=\ln t.
$$
Notice that $t=t(r)$ and $T=T(r)$ are both radial symmetric functions. Since $t$ and $T$ are  smooth in the interior, we have
$$
t'(0)=T'(0)=0.
$$
On the boundary, the asymptotic expansion of $\rho_*$ gives the boundary values of $t$ and $T$:
$$
t(1)=1, \quad T(1)=0.
$$
Moreover, if $s=\frac{n}{2}+\gamma\in(\frac{n}{2}+1,n)$ and $n\geq 4$, then
$$
\begin{gathered}
t'(1)=-1, \quad  T'(1)= -1.
\\
t''(1)=\frac{3}{2}-\frac{n}{2(2s-n-2)}, \quad T''(1)=-\frac{n+1-s}{2s-n-2}.
\end{gathered}
$$
To get the derivative estimates for $T$ and hence $t$, we deduce the ODE for $T$ from
$$
\triangle_+ v_*-s(n-s)v_*=0, \quad\mathrm{where}\quad v_*= \rho_*^{n-s} =\rho_0t=\rho_0e^{T}.
$$
Direct computation shows that $T$ satisfies the equation
\begin{equation}\label{eq.T}
T'' + (n-s) (T')^2 + \left(\frac{2s-n-1}{\rho_0}r+\frac{n}{r}\right) T' +\frac{2s-n-1}{\rho_0}=0.
\end{equation}
This is a first order nonlinear ODE for $T'$. 
For simplicity, we denote $F(r)=T'(r)$. Then $F$ satisfies
\begin{equation}\label{eq.F}
\begin{gathered}
F' + (n-s) F^2 + \left(\frac{2s-n-1}{\rho_0}r+\frac{n}{r}\right) F+\frac{2s-n-1}{\rho_0}=0, 
\\
F(0)=0, \quad F(1)=-1. 
\end{gathered}
\end{equation}
\begin{lemma}\label{lem.3.2}
Suppose $s=\frac{n}{2}+\gamma \in (\frac{n}{2}+1,n)$ and $n\geq 4$. Then for all $r\in [0,1]$, 
$$
0< 1+rF\leq 1. 
$$
\end{lemma}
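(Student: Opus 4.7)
The plan is to introduce the auxiliary function $H(r) = 1 + rF(r)$ and to transform the ODE for $F$ into one for $H$ that admits an elementary maximum--principle argument. Substituting $F = (H-1)/r$ and $F' = H'/r - (H-1)/r^2$ into (\ref{eq.F}) and multiplying by $r$, all $1/r^2$ terms combine to give
\begin{equation}
H' + \frac{(H-1)\bigl[(n-s)H + (s-1)\bigr]}{r} + \frac{\beta r}{\rho_0}\, H = 0, \qquad \beta := 2s-n-1 > 1,
\end{equation}
with boundary data $H(0)=1$ and $H(1)=0$. Note that for $s \in (\tfrac{n}{2}+1, n)$ and $n \geq 4$ we have $n - s > 0$, $s - 1 > 1$, and $\beta > 1$.

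The heart of the argument lies in two simple barrier computations on the level sets $H=1$ and $H=0$:
\begin{equation}
H'\bigr|_{H=1} = -\frac{\beta r}{\rho_0} < 0, \qquad H'\bigr|_{H=0} = \frac{s-1}{r} > 0, \qquad r \in (0,1).
\end{equation}
For the upper bound, I would first extract the initial behavior from (\ref{eq.F}): the leading--order balance $F' + nF/r + 2\beta \approx 0$ near $r=0$ forces $F(r) \sim -\tfrac{2\beta}{n+1}\, r$, so $H(r) = 1 - \tfrac{2\beta}{n+1}\, r^2 + o(r^2)$ and in particular $H < 1$ in a right neighborhood of $0$. If $H$ ever reached $1$ again, let $r_1 \in (0,1]$ be the first such point; then $H'(r_1) \geq 0$, contradicting the first barrier computation. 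Hence $H \leq 1$ on $[0,1]$.

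For the lower bound, suppose toward contradiction that $H$ vanishes at some $r_0 \in (0,1)$ and take the smallest such $r_0$. Then $H \geq 0$ on $[0, r_0]$ and $H(r_0)=0$, giving $H'(r_0) \leq 0$; this contradicts the second barrier computation. Therefore $H > 0$ on $[0,1)$, with the boundary value $H(1) = 0$ consistent with $F(1) = -1$ (so the statement is to be read as $0 < 1 + rF \leq 1$ on $[0,1)$, with equality to $0$ only at $r=1$).

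The main obstacle is the doubly singular character of the ODE for $F$: the term $n/r$ produces a regular singularity at $r=0$, and $1/\rho_0 = 2/(1-r^2)$ blows up at $r=1$. The upper--bound contradiction argument relies on the initial expansion showing $H < 1$ immediately right of $r=0$, which requires extracting the correct linear behavior of $F$ from the singular equation; the lower--bound argument requires $r_0$ to be strictly less than $1$, which is automatic from the choice of smallest zero but worth noting because the endpoint $r=1$ is itself a limit point where $1 + rF \to 0$. Once these endpoint subtleties are handled, the proof is just the two one--line maximum--principle computations above.
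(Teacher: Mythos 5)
Your proof is correct, and it takes a genuinely different route from the paper's. The paper works directly with the Riccati equation (\ref{eq.F}) for $F$: for the upper bound it multiplies by the integrating factor $r^n e^{\Phi}/\rho_0^{2s-n-1}$ with $\Phi=\int_1^r(n-s)F$, turning the equation into a monotone total derivative that vanishes at $r=0$ and thereby gives $F\leq 0$ at once; for the lower bound it integrates by parts to get the Duhamel-type representation (\ref{eq.integralF}) for $1+rF$ and observes that the integrand is strictly positive at a putative first zero, a contradiction. You instead carry the substitution $H=1+rF$ all the way through to a clean $H$-equation and run a pure barrier argument on the level sets $\{H=0\}$ and $\{H=1\}$; your derived ODE checks out (the $1/r^2$ terms do cancel as claimed), and the level-set signs $H'|_{H=1}=-\beta r/\rho_0<0$, $H'|_{H=0}=(s-1)/r>0$ are correct given $\beta=2s-n-1>1$, $s-1>0$, $n-s>0$ in the stated range. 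What each buys: your barrier argument for the lower bound $H>0$ is shorter and more transparent than the paper's integral-representation step, whereas the paper's integrating-factor trick for $H\leq 1$ is slicker than yours — you must first extract the second-order expansion $H(r)=1-\tfrac{2\beta}{n+1}r^2+o(r^2)$ to push $H$ strictly below $1$ in a right-neighborhood of $0$ before a first-crossing argument can even begin, whereas the paper reads the sign of $F$ off directly. Both proofs rest on the same underlying regularity fact, stated explicitly at the start of the paper's proof, that $F$ exists and is $C^1$ on $[0,1]$ for each fixed $s$; your one-sided-derivative arguments at first crossings and your Taylor expansion at $r=0$ both use it. You are also right to flag that the lemma's stated range $r\in[0,1]$ should be read as $[0,1)$ for the strict lower bound, since $F(1)=-1$ forces $1+rF$ to vanish at the endpoint; the paper's own argument only establishes $R_s=1$ as a supremum.
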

\begin{proof} Here $F$ already exists and is $C^1$ up to boundary for every fixed $s\in  (\frac{n}{2}+1,n)$. Let 
$$
\Phi(r)=\int_1^{r} (n-s)F(\bar{r})d\bar{r}.
$$
First, $2s-n-1>0$ and hence equation (\ref{eq.F}) gives
$$
\left( \frac{r^ne^{\Phi(r)}}{\rho_0^{2s-n-1}} F \right)' = - \frac{(2s-n-1)r^ne^{\Phi(r)}}{\rho_0^{2s-n}} \leq 0.
$$
This shows that $\rho_0^{-(2s-n-1)} r^ne^{\Phi(r)} F $ is decreasing. Since it is valued $0$ at $r=0$, we get  $F(r)\leq 0$ for $r\in [0,1]$. 
% While $r$ is small, $0\leq r\leq 1/2$
Second, denote $\rho_0=\rho_0(r)$. Then
$$
\begin{aligned}
\int_0^r \frac{(2s-n-1)\bar{r}^ne^{\Phi(\bar{r})}}{\rho_0(\bar{r})^{2s-n}} d\bar{r}
&=\int_0^r \bar{r}^{n-1}e^{\Phi(\bar{r})} d \left(\frac{1}{\rho_0(\bar{r})^{2s-n-1}}\right)
\\
&= \frac{r^{n-1}e^{\Phi(r)}}{\rho_0^{2s-n-1}} -\int_0^{r}\frac{[(n-s)\bar{r}F(\bar{r})+n-1]\bar{r}^{n-2}e^{\Phi(\bar{r})}}{\rho_0(\bar{r})^{2s-n-1}} d\bar{r}.
\end{aligned}
$$
This implies that
$$
F(r)=-\frac{1}{r} + \frac{\rho_0^{2s-n-1}}{r^n e^{\Phi(r)}}  \int_0^{r}\frac{[(n-s)\bar{r}F(\bar{r})+(n-1)]\bar{r}^{n-2}e^{\Phi(\bar{r})}}{\rho_0(\bar{r})^{2s-n-1}} d\bar{r}. 
$$
And hence
\begin{equation}\label{eq.integralF}
1+rF(r) =  \frac{\rho_0^{2s-n-1}}{r^{n-1} e^{\Phi(r)}}       \int_0^{r}\frac{[(n-s)(1+\bar{r}F(\bar{r}))+(s-1)]\bar{r}^{n-2}e^{\Phi(\bar{r})}}{\rho_0(\bar{r})^{2s-n-1}} d\bar{r}. 
\end{equation}
At $r=0$, we have $1+rF(r)=1$ for every fixed $s$. Let
$$
R_s=\sup \{R\in[0,1]: 1+rF(r)>0, \forall\ r\in[0,R] \}. 
$$
By continuity  $R_s>0$ for every fixed $s\in  (\frac{n}{2}+1,n)$. If $R_s<1$, then $1+rF(r)|_{r=R_s}=0$ and $1+rF(r)>0$ for all $r\in[0,R_s)$. So the integral in (\ref{eq.integralF})
is strictly positive at $r=R_s$. This gives a contradiction. Hence $R_s=1$ for all $s\in  (\frac{n}{2}+1,n)$ and we finish the proof.
\end{proof}
\begin{lemma}\label{lem.3.3}
For $s=\frac{n}{2}+\gamma \in (\frac{n}{2}+1,n)$ and $n\geq 4$, there exists a constant $C>0$ independent of $s$ and $r$  such that for all $r\in [0,1)$, 
$$
0\leq \frac{1+rF}{\rho_0}\leq \frac{C}{2s-n-2}.
$$
\end{lemma}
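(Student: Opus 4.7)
The lower bound $0 \leq (1+rF)/\rho_0$ is immediate from Lemma \ref{lem.3.2} together with the positivity of $\rho_0$ on $[0,1)$, so the whole task is the upper bound. My plan is to split into two regions. On $[0,1/2]$, both $\rho_0(r) \geq 3/8$ and $1+rF(r) \leq 1$ (Lemma \ref{lem.3.2}), so the ratio is bounded by an absolute constant. Since $2s-n-2 \in (0, n-2)$, this constant is absorbed into a bound of the form $C/(2s-n-2)$ depending only on $n$. All the real work is on $[1/2, 1)$.

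On $[1/2,1)$, I would start from the integral identity (\ref{eq.integralF}) derived in the proof of Lemma \ref{lem.3.2}. Writing $G(r)=1+rF(r)$, Lemma \ref{lem.3.2} gives $0\leq G\leq 1$, so $(n-s)G+(s-1)\leq n-1$. For the exponential factor, the estimate $F(\bar r)\geq -1/\bar r$ (equivalent to $G(\bar r)\geq 0$) yields
$$
\Phi(\bar r)-\Phi(r)=-(n-s)\int_{\bar r}^{r}F(\tilde r)\,d\tilde r \leq (n-s)\ln(r/\bar r),
$$
hence $e^{\Phi(\bar r)-\Phi(r)}\leq (r/\bar r)^{n-s}$. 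Plugging both estimates into (\ref{eq.integralF}) collapses the representation to
$$
G(r)\leq (n-1)\,\rho_0(r)^{2s-n-1}\,r^{-(s-1)}\int_0^r \frac{\bar r^{s-2}}{\rho_0(\bar r)^{2s-n-1}}\,d\bar r.
$$

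For $r\in[1/2,1)$ the prefactor $r^{-(s-1)}$ is bounded by $2^{n-1}$ and $\bar r^{s-2}\leq 1$, so I only need to estimate $\int_0^r \rho_0(\bar r)^{-(2s-n-1)}\,d\bar r$. I would split it at $1/2$: on $[0,1/2]$ the integrand is bounded by an absolute constant (depending only on $n$), while on $[1/2,r]$ the bound $\rho_0(\bar r)\geq (1-\bar r)/2$ and direct integration give
$$
\int_{1/2}^r (1-\bar r)^{-(2s-n-1)}\,d\bar r \leq \frac{(1-r)^{-(2s-n-2)}}{2s-n-2}\leq \frac{C\,\rho_0(r)^{-(2s-n-2)}}{2s-n-2},
$$
using $1-r\geq \rho_0(r)$. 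Combining this with the factor $\rho_0(r)^{2s-n-1}$ outside the integral and dividing by $\rho_0(r)$ produces
$$
\frac{1+rF(r)}{\rho_0(r)} \leq C\,\rho_0(r)^{2s-n-2} + \frac{C'}{2s-n-2},
$$
and since $\rho_0\leq 1/2$, both terms are dominated by $C''/(2s-n-2)$.

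The step I expect to be most delicate is keeping the constant truly uniform in $s\in(n/2+1,n)$. The exponents $2s-n-1\in(1,n-1)$ and $2s-n-2\in(0,n-2)$ appear in many places (powers of $\rho_0(\bar r)$, of $1-\bar r$, of $r$, and inside expressions like $(4/3)^{2s-n-1}$ or $2^{s-1}$), and one needs to verify that each such factor stays bounded by a quantity depending only on $n$. The other subtle point is the exponential bound on $\Phi(\bar r)-\Phi(r)$: the estimate $F\geq -1/\bar r$ is barely sharp enough to make $(r/\bar r)^{n-s}$ integrable against $\bar r^{n-2}/r^{n-1}$, and it is this precise balance that lets the singularity at $\bar r=0$ in the reduced integrand $\bar r^{s-2}/\rho_0(\bar r)^{2s-n-1}$ remain integrable while the singularity at $\bar r=r$ produces exactly the $1/(2s-n-2)$ scaling claimed.
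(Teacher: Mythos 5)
Your proof is correct and follows essentially the same route as the paper: both start from the integral representation (\ref{eq.integralF}), bound the bracket $(n-s)(1+\bar r F)+(s-1)$ by $n-1$ and the exponential ratio via $F\geq -1/\bar r$, and reduce to the integral $\int_0^r \bar r^{s-2}\rho_0(\bar r)^{-(2s-n-1)}d\bar r$. The only cosmetic difference is the last step: the paper uses $\bar r^{s-2}\leq \bar r$ (valid since $s\geq 3$ when $n\geq 4$) and the exact substitution $\bar r\,d\bar r = -d\rho_0$, which yields the $\rho_0(r)^{-(2s-n-2)}/(2s-n-2)$ bound in one line, whereas you split the integral and use $\rho_0(\bar r)\geq(1-\bar r)/2$; both give the same uniform constant.
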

\begin{proof}
First notice that from Lemma \ref{lem.3.2} we have for $r\in(0,1)$, 
$$
0\leq \Phi(r)\leq -(n-s)\ln r\quad \Longrightarrow \quad 0\leq e^{\Phi(r)}\leq r^{s-n}.
$$ 
Since $\rho_0$ is strictly positive in the interior, we only need to show the constant exists for $r\in[\frac{1}{3}, 1)$. 
By (\ref{eq.integralF}) , 
\begin{equation*}
\begin{aligned}
0\leq \frac{1+rF(r)}{\rho_0} &=  \frac{\rho_0^{2s-n-2}}{r^{n-1} e^{\Phi(r)}} \int_0^{r}\frac{[(n-s)(1+\bar{r}F(\bar{r}))+(s-1)]\bar{r}^{n-2}e^{\Phi(\bar{r})}}{\rho_0(\bar{r})^{2s-n-1}} d\bar{r}
\\
&\leq C_1\rho_0^{2s-n-2} \int_0^{r} \frac{\bar{r}^{s-2}}{\rho_0(\bar{r})^{2s-n-1}} d\bar{r}
\\
&\leq C_1\rho_0^{2s-n-2} \int_0^{r} \frac{\bar{r}}{\rho_0(\bar{r})^{2s-n-1}} d\bar{r}
\\
&\leq \frac{C_1 }{2s-n-2}.
\end{aligned}
\end{equation*}
Here $C_1>0$ is independent of $s\in (\frac{n}{2}+1, n)$ and $r\in[\frac{1}{3}, 1)$.  
\end{proof}
\begin{lemma}\label{lem.3.4}
For $s=\frac{n}{2}+\gamma\in (\frac{n+3}{2}, n)$ and $r\in[\frac{1}{3}, 1)$, there exists a constant $C>0$ independent of $s$ and $r$   such that 
$$
\begin{gathered}
\left| \frac{1+rF}{\rho_0}-\frac{s-1}{2s-n-2}\right| \leq \frac{C\rho_0}{2s-n-3}. 
\end{gathered}
$$
\end{lemma}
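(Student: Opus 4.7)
The plan is to refine the integral representation (\ref{eq.integralF}) from the proof of Lemma \ref{lem.3.3} by isolating an explicit leading term and controlling the remainder uniformly in $s\in((n+3)/2,n)$ and $r\in[1/3,1)$. Writing $H(\bar r):=(1+\bar r F(\bar r))/\rho_0(\bar r)$ so that $1+\bar r F(\bar r)=\rho_0(\bar r)H(\bar r)$, the representation (\ref{eq.integralF}) splits as $H(r)=I_1(r)+I_2(r)$, where $I_1$ collects the $(s-1)$-summand of the numerator and $I_2$ collects the $(n-s)(1+\bar r F)$-summand.

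The key step is integration by parts in $I_1$ based on the identity $\tfrac{d}{d\bar r}\rho_0(\bar r)^{-(2s-n-2)}=(2s-n-2)\bar r\rho_0(\bar r)^{-(2s-n-1)}$, which rewrites $\bar r^{n-2}e^{\Phi(\bar r)}/\rho_0^{2s-n-1}=\tfrac{1}{2s-n-2}\bar r^{n-3}e^{\Phi(\bar r)}\tfrac{d}{d\bar r}\rho_0^{-(2s-n-2)}$. The boundary contribution at $\bar r=r$ yields the explicit leading term $\tfrac{s-1}{(2s-n-2)r^2}$, while the boundary contribution at $\bar r=0$ vanishes because $\bar r^{n-3}e^{\Phi(\bar r)}\leq \bar r^{s-3}$ (using the pointwise bound $e^{\Phi(\bar r)}\leq \bar r^{s-n}$ established in the proof of Lemma \ref{lem.3.3}) tends to $0$ under the strengthened hypothesis $s>(n+3)/2\geq 7/2>3$. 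Using $\Phi'(\bar r)=(n-s)F(\bar r)$ and $\bar r F=\rho_0 H-1$ to simplify $(\bar r^{n-3}e^{\Phi})'=\bar r^{n-4}e^{\Phi}[(s-3)+(n-s)\rho_0 H]$, the remainder becomes
$$
\mathcal R(r)=\frac{(s-1)\rho_0(r)^{2s-n-2}}{(2s-n-2)r^{n-1}e^{\Phi(r)}}\int_0^r\frac{\bar r^{n-4}e^{\Phi(\bar r)}\bigl[(s-3)+(n-s)\rho_0(\bar r)H(\bar r)\bigr]}{\rho_0(\bar r)^{2s-n-2}}\,d\bar r.
$$
Since $1/r^2-1=2\rho_0(r)/r^2$, one obtains
$$
H(r)-\frac{s-1}{2s-n-2}=\frac{2(s-1)\rho_0(r)}{(2s-n-2)r^2}-\mathcal R(r)+I_2(r),
$$
reducing the lemma to $O(\rho_0/(2s-n-3))$ bounds on the three right-hand pieces.

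The analytic core is the elementary inequality $\rho_0(r)^{2s-n-2}\int_0^r\rho_0(\bar r)^{-(2s-n-2)}\,d\bar r\leq C\rho_0(r)/(2s-n-3)$, which follows from $\rho_0\geq (1-\bar r)/2$ and direct integration and requires precisely $2s-n-3>0$. Combined with the a priori bound $|H|\leq C/(2s-n-2)$ from Lemma \ref{lem.3.3}, the pointwise estimate $e^{\Phi(r)}\geq 1$, and boundedness of $r^{n-1}$ on $[1/3,1)$, this immediately yields $|I_2(r)|\leq C\rho_0/(2s-n-3)$ after absorbing $\bar r^{n-2}e^{\Phi}\leq \bar r^{s-2}\leq 1$. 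The first explicit term is $O(\rho_0)$, hence controlled since $2s-n-3\leq n-3$. The main technical obstacle is $\mathcal R(r)$, where the factor $\bar r^{n-4}e^{\Phi(\bar r)}\leq \bar r^{s-4}$ can be mildly singular at the origin; this is handled by splitting the integral into $[0,1/3]$ (on which $\rho_0^{-(2s-n-2)}$ is uniformly bounded and $\int_0^{1/3}\bar r^{s-4}\,d\bar r$ stays uniform in $s$ because $s-3>1/2$) and $[1/3,r]$ (on which $\bar r^{n-4}e^{\Phi(\bar r)}$ is uniformly bounded and the weight is controlled by the core inequality). Throughout, the strengthened lower bound $s>(n+3)/2$ is used both to make $2s-n-3$ positive and to keep all near-origin integrals uniform in $s$, which is what distinguishes the present lemma from Lemma \ref{lem.3.3}.
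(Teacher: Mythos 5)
Your proof is correct and follows essentially the same route as the paper: start from the integral representation (\ref{eq.integralF}), integrate by parts against the weight $d\bigl(\rho_0^{-(2s-n-2)}\bigr)$ to peel off the boundary term $\tfrac{s-1}{(2s-n-2)r^2}$, and control the remainder via the elementary estimate $\rho_0(r)^{2s-n-2}\int_0^r\rho_0(\bar r)^{-(2s-n-2)}\,d\bar r\leq C\rho_0(r)/(2s-n-3)$ together with the a priori bounds of Lemmas \ref{lem.3.2}--\ref{lem.3.3}. The one genuine (if minor) difference is that you integrate by parts only the $(s-1)$-summand of the numerator and treat the $(n-s)(1+\bar r F)$-summand as a separate piece $I_2$ bounded directly by Lemma \ref{lem.3.3}, whereas the paper integrates by parts the full numerator, producing a remainder $A(\bar r)$ that contains $\bar r^2F'$ and therefore requires invoking the ODE (\ref{eq.F}) to bound $F'$; your split sidesteps that step, which is a mild simplification, but the overall decomposition, the use of $e^{\Phi(\bar r)}\leq \bar r^{\,s-n}$ and the splitting of the $\bar r$-integral at $1/3$, and the $1/(2s-n-3)$ bookkeeping are identical to the paper's.
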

\begin{proof}
Again from (\ref{eq.integralF}) and integrating by parts,  we get
\begin{equation}\label{eq.integralF2}
\begin{aligned}
\frac{1+rF(r)}{\rho_0} &=  \frac{\rho_0^{2s-n-2}}{r^{n-1} e^{\Phi(r)}}  \int_0^{r} \frac{[(n-s)(1+\bar{r}F(\bar{r}))+(s-1)]\bar{r}^{n-3}e^{\Phi(\bar{r})}} 
{2s-n-2} d\left(\frac{1}{\rho_0(\bar{r})^{2s-n-2}}\right)
=I - II; 
\\
I &= \frac{(n-s)(1+rF)+(s-1)}{(2s-n-2)r^2} , 
\\
II &= \frac{\rho_0^{2s-n-2}}{(2s-n-2)r^{n-1} e^{\Phi(r)}}     \int_0^{r} \frac{d[(n-s)(1+\bar{r}F(\bar{r}))+(s-1)]\bar{r}^{n-3}e^{\Phi(\bar{r})}}{\rho_0(\bar{r})^{2s-n-2}}
\\
&= \frac{\rho_0^{2s-n-2}}{(2s-n-2)r^{n-1} e^{\Phi(r)}}    \int_0^r \frac{A(\bar{r})\bar{r}^{n-4}e^{\Phi(\bar{r})}}{\rho_0(\bar{r})^{2s-n-2}} d\bar{r}, 
\\
A(r)&=(n-s)(r^2F'+rF)+[(n-s)(1+rF)+(s-1)] [(n-3)+(n-s)rF].
\end{aligned}
\end{equation}
For $I$, by Lemma \ref{lem.3.3} it is obviously that for $r\in[\frac{1}{3}, 1)$, 
$$
\left| I-\frac{s-1}{2s-n-2}  \right| \leq \left(\frac{(n-s)C_1}{2s-n-2}+\frac{2(s-1)}{(2s-n-2)r^2}\right)\rho_0 \leq \frac{C_2\rho_0}{2s-n-2}. 
$$
Here  $C_1>0, C_2>0$ are constants independent of $r\in[\frac{1}{3}, 1)$ and $s\in (\frac{n+3}{2}, n)$. 
For $II$, using equation (\ref{eq.F}), Lemma \ref{lem.3.2} and Lemma \ref{lem.3.3}, we get for all $r\in [0,1)$,
$$
\begin{aligned}
\left|r^2F'\right|=\left|(n-s)(rF)^2 + (2s-n-1)r^2\left(\frac{1+rF}{\rho_0}\right) +nrF\right| \leq C_3.
\end{aligned}
$$
Hence $|A(r)|\leq C_4$. Here $C_3>0, C_4>0$ are constants independent of $r\in(0,1)$ and $s\in (\frac{n+3}{2}, n)$. Thus for $r\in[\frac{1}{3}, 1)$,
$$
\begin{aligned}
|II | & \leq C_5 \rho_0^{2s-n-2} \int_0^r \frac{\bar{r}^{s-4}}{\rho_0(\bar{r})^{2s-n-2}} d\bar{r}
\\
& \leq C_6\rho_0^{2s-n-2}\left( \int_0^{1/3}\bar{r}^{s-4} d\bar{r} +  \int_{1/3}^r \frac{\bar{r}}{\rho_0(\bar{r})^{2s-n-2}} d\bar{r}\right)
\\
& \leq \frac{C_7}{2s-n-3}\rho_0. 
\end{aligned}
$$
Here $C_5>0, C_6>0, C_7>0$ are constants independent of $r\in[\frac{1}{3},1)$ and $s\in (\frac{n+3}{2}, n)$. 
\end{proof}

By  Lemma \ref{lem.3.2}-\ref{lem.3.4} and equation (\ref{eq.T}), some direct computations show that 
\begin{lemma}\label{lem.3.5}
For $s=\frac{n}{2}+\gamma\in (\frac{n+3}{2}, n)$, $n\geq 4$ and $r\in[\frac{1}{3}, 1)$, there exists a constant $C>0$ independent of $s$ and $r$   such that 
$$
\begin{gathered}
\left |T''+\frac{n+1-s}{2s-n-2}\right| \leq \frac{C\rho_0}{2s-n-3}, 
\quad
|1+T'| \leq C\rho_0, 
\quad
|T| \leq C\rho_0,
\quad
|t-1| \leq C\rho_0. 
\end{gathered}
$$
\end{lemma}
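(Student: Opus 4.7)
The plan is to derive the four bounds in order, reducing each to the previous ones together with Lemmas~\ref{lem.3.2}--\ref{lem.3.4} and the ODE for $F=T'$. As a preliminary observation, Lemma~\ref{lem.3.2} gives $-1/r\le F\le 0$, so $|F|\le 3$ uniformly in $s$ on $r\in[\tfrac{1}{3},1)$.

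For $|1+T'|\le C\rho_0$, I would write $1+F=(1+rF)+(1-r)F$. Since $s>(n+3)/2$ forces $2s-n-2>1$, Lemma~\ref{lem.3.3} bounds the first summand by $C\rho_0$; the second is $\le 3(1-r)\le C\rho_0$, using $\rho_0\ge(1-r)/2$ on $[\tfrac{1}{3},1)$. The estimate $|T|\le C\rho_0$ then follows from $T(1)=0$ by writing
\[
T(r)=(1-r)-\int_r^1\bigl(1+T'(\bar r)\bigr)\,d\bar r,
\]
with both contributions $O(\rho_0)$ once one notes $\int_r^1\rho_0(\bar r)\,d\bar r\le(1-r)^2/2\le\rho_0$. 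Finally $|t-1|=|e^T-1|\le e^{\|T\|_\infty}|T|\le C\rho_0$, since $\|T\|_\infty$ is uniformly bounded in $s$ on $[\tfrac{1}{3},1)$ by the previous step.

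The main work is the bound on $T''$. Rearranging the ODE for $F$ gives
\[
T''=-(n-s)(T')^2-(2s-n-1)\,\frac{1+rT'}{\rho_0}-\frac{n}{r}T'.
\]
From $|1+T'|\le C\rho_0$ one has $(T')^2=1+O(\rho_0)$ and $\tfrac{n}{r}T'=-n+O(\rho_0)$; by Lemma~\ref{lem.3.4}, $\tfrac{1+rT'}{\rho_0}=\tfrac{s-1}{2s-n-2}+E$ with $|E|\le C\rho_0/(2s-n-3)$. Substituting into the expression for $T''$ and invoking the algebraic identity
\[
-(n-s)-(2s-n-1)\cdot\frac{s-1}{2s-n-2}+n=-\frac{n+1-s}{2s-n-2}
\]
(verified by placing the left-hand side over the common denominator $2s-n-2$), one obtains
\[
T''+\frac{n+1-s}{2s-n-2}=O(\rho_0)-(2s-n-1)E.
\]
Since $s<n$, we have $2s-n-1\le n-1$ and $2s-n-3\le n-3$, so both error terms are dominated by $C\rho_0/(2s-n-3)$ with $C$ depending only on $n$.

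The main obstacle is the exact cancellation in the algebraic identity above: each of the three leading contributions in the formula for $T''$ carries an $O(1)$ piece, and only after the identity do these combine to the stated constant $-(n+1-s)/(2s-n-2)$. The delicacy is compounded by the prefactor $(2s-n-1)$ multiplying the error $E$ from Lemma~\ref{lem.3.4}, whose effective denominator is the slightly worse $2s-n-3$; this is precisely what restricts the hypothesis to $s>(n+3)/2$ rather than the weaker $s>(n+2)/2$ used for the first three bounds.
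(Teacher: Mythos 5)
Your proof is correct and fills in exactly the computation the paper leaves implicit (the paper's ``proof'' is the one-line remark that the bounds follow from Lemmas~\ref{lem.3.2}--\ref{lem.3.4} and equation~(\ref{eq.T}) by direct computation). The decomposition $1+F=(1+rF)+(1-r)F$, the integration from $r$ to $1$ using $T(1)=0$, and the rearrangement of the ODE for $F$ together with the algebraic identity $-(n-s)-(2s-n-1)\frac{s-1}{2s-n-2}+n=-\frac{n+1-s}{2s-n-2}$ are precisely the intended steps, and your accounting of where the $2s-n-3$ denominator and the hypothesis $s>\frac{n+3}{2}$ enter (via Lemma~\ref{lem.3.4}) is accurate.
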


%%%%
\subsection{Curvature estimates of adapted metric $g_*$.}
We consider the weighted scalar curvature and weighted Schouten tensor for adapted metric $g_*=\rho_*^2 g_+$. Recall the formulae  given in Lemma 3.2 of \cite{CC1}:
$$
\begin{gathered}
J_{\phi_1}^{m_1} =- \rho_*^{-2} \left( |\nabla \rho_*|^2_{g_*} -1\right),
\\
P_{\phi_1}^{m_1}= P_{g_*}=\frac{1}{n-1}\left(Ric_{g_*}-\frac{R_{g_*}}{2n}g_*\right). 
\end{gathered}
$$
\begin{lemma}\label{lem.3.6}
On $\mathbb{H}^{n+1}$, 
for $s=\frac{n}{2}+\gamma \in (\frac{n+3}{2},n), n\geq 4$ and $r\in[\frac{1}{3}, 1)$, 
there exists a constant $C>0$ independent of $s$ and $r$ such that
$$
\left| J_{\phi_1}^{m_1} - \frac{n}{2s-n-2} \right| \leq \frac{C\rho_0}{2s-n-3}. 
$$
\end{lemma}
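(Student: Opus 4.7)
The plan is to compute $J_{\phi_1}^{m_1}$ explicitly on the hyperbolic space in terms of the radial function $F = T'$ and $t = e^T = \rho_*/\rho_0$, and then reduce the estimate to an application of Lemmas \ref{lem.3.2}--\ref{lem.3.5}. The guiding observation is that the quantity $(1+rF)/\rho_0$ is not going to zero but rather to $(s-1)/(2s-n-2)$ by Lemma \ref{lem.3.4}; after a cancellation this is precisely what produces the leading value $n/(2s-n-2)$.

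First I would compute $|\nabla \rho_*|^2_{g_*}$. Since $\rho_*$ is radial and $g_+^{rr} = \rho_0^2$, we have $|\nabla \rho_*|^2_{g_+} = \rho_0^2(\partial_r \rho_*)^2$. Writing $\rho_* = \rho_0 t$ and using $t' = Ft$, one finds $\partial_r \rho_* = t(\rho_0 F - r)$, and the conformal change formula $|\nabla u|^2_{\phi^2 g} = \phi^{-2}|\nabla u|^2_g$ yields
$$
|\nabla \rho_*|^2_{g_*} \;=\; \rho_*^{-2}|\nabla \rho_*|^2_{g_+} \;=\; (\rho_0 F - r)^2.
$$
Expanding and using $r^2 - 1 = -2\rho_0$ gives the algebraic identity $(\rho_0 F - r)^2 - 1 = \rho_0^2 F^2 - 2\rho_0(1+rF)$, so that
$$
J_{\phi_1}^{m_1} \;=\; -\rho_*^{-2}\bigl[(\rho_0 F - r)^2 - 1\bigr] \;=\; \frac{2(1+rF)}{\rho_0 t^2} - \frac{F^2}{t^2}.
$$

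Next, using the elementary identity $\tfrac{n}{2s-n-2} = \tfrac{2(s-1)}{2s-n-2} - 1$, I split
$$
J_{\phi_1}^{m_1} - \frac{n}{2s-n-2} \;=\; \frac{2}{t^2}\left[\frac{1+rF}{\rho_0} - \frac{s-1}{2s-n-2}\right] + \frac{2(s-1)}{2s-n-2}\left(\frac{1}{t^2}-1\right) - \left[\frac{F^2}{t^2} - 1\right].
$$
Lemma \ref{lem.3.5} gives $|t-1|,|F+1|\leq C\rho_0$ on $r\in[\tfrac{1}{3},1)$, so $t^2$ is bounded away from $0$, $|1/t^2-1|\leq C\rho_0$, and $|F^2-t^2|\leq |F^2-1|+|1-t^2|\leq C\rho_0$. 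Since $(s-1)/(2s-n-2)$ is uniformly bounded on $s\in(\tfrac{n+3}{2},n)$ (for fixed $n$), the last two bracketed terms are $O(\rho_0)$. Finally, Lemma \ref{lem.3.4} bounds the first bracket by $C\rho_0/(2s-n-3)$. Because $1\leq (n-3)/(2s-n-3)$ on our parameter range, the $O(\rho_0)$ contributions are absorbed into $C\rho_0/(2s-n-3)$, giving the stated estimate.

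The main technical point is the delicate cancellation in Step 2: neither $\tfrac{2(1+rF)}{\rho_0 t^2}$ nor $\tfrac{F^2}{t^2}$ converges to zero as $\rho_0 \to 0$, but each has a finite limit, and it is only after subtracting $n/(2s-n-2)$ that the remainder is of order $\rho_0$. Verifying that the singular factor $1/(2s-n-3)$ in Lemma \ref{lem.3.4} is what truly controls the error — rather than a worse combination — is the step where the refined asymptotics from Section 4 are crucial; the remaining estimates then follow by routine manipulation.
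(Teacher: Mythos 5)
Your proof is correct and follows essentially the same route as the paper: you derive the identity $J_{\phi_1}^{m_1}=e^{-2T}\bigl(\tfrac{2(1+rF)}{\rho_0}-F^2\bigr)$ from the formula $J_{\phi_1}^{m_1}=-\rho_*^{-2}(|\nabla\rho_*|^2_{g_*}-1)$ and then invoke Lemmas~\ref{lem.3.2}--\ref{lem.3.5}, exactly as the paper does, merely filling in the algebraic decomposition and the bookkeeping on how the $O(\rho_0)$ remainders are absorbed into $C\rho_0/(2s-n-3)$ that the paper leaves to the reader.
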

\begin{proof}
Here $\rho_*=\rho_0e^T$ and
$$
J_{\phi_1}^{m_1}= e^{-2T}\left(\frac{2(1+rT')}{\rho_0}-|T'|^2\right). 
 $$
 Then applying the estimates given by Lemma \ref{lem.3.2}-\ref{lem.3.5} we get the estimate. 
\end{proof}

To deal with the weighted Schouten tensor for adapted metric $g_*$, we first take the polar coordinates $(r,\theta)$ for $r\in[\frac{1}{3},1)$. At every point $\theta\in\mathbb{S}^n$, take $\theta=(\theta_1, ..., \theta_n) $ to be the normal coordinates on the sphere. Then the conformal relation $g_*=e^{2T}g_0$ together with Lemma \ref{lem.3.2}-\ref{lem.3.5} gives the estimates of each component of $Ric_{g_*}$ and hence $P_{g_*}$.

\begin{lemma}\label{lem.3.7}
On $\mathbb{H}^{n+1}$
for $s=\frac{n}{2}+\gamma\in (\frac{n+3}{2},n), n\geq 4$ and $r\in[\frac{1}{3}, 1)$  there exists a constant $C>0$ independent of $s$ and $r$  such that
$$
\begin{aligned}
\left|[P_{g_*}]_{rr}-\frac{n}{2(2s-n-2)}\right| \leq  \frac{C\rho_0}{2s-n-3}, 
\quad
\left|[P_{g_*}]_{\theta_i\theta_j}-\frac{1}{2}\delta_{ij}\right| \leq  \frac{C\rho_0}{2s-n-3},
\quad
[P_{g_*}]_{r\theta_j}=0. 
\end{aligned}
$$
\end{lemma}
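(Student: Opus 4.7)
The plan is to apply the conformal transformation law for the Schouten tensor to the relation $g_*=e^{2T}g_0$, where $g_0$ is the flat Euclidean metric on the unit ball (so $P_{g_0}=0$). In dimension $n+1\ge 5$ the standard formula gives
$$
P_{g_*} = -\nabla^2 T + dT\otimes dT - \tfrac{1}{2}|\nabla T|^2_{g_0}\,g_0,
$$
with all gradients, Hessians, and norms taken with respect to $g_0$. Since $T=T(r)$ is radial, each component of $P_{g_*}$ reduces to an explicit polynomial expression in $T'$ and $T''$, at which point Lemma \ref{lem.3.5} delivers all three claimed estimates simultaneously.

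In polar coordinates with $g_0 = dr^2 + r^2 g_{\mathbb{S}^n}$, the only Christoffel symbol of $g_0$ entering a radial Hessian is $\Gamma^r_{\theta_i\theta_j} = -r\,g^{\mathbb{S}^n}_{ij}$. Working at a fixed boundary point where $\theta=(\theta_1,\ldots,\theta_n)$ are Riemannian normal coordinates on $\mathbb{S}^n$, we have $g^{\mathbb{S}^n}_{ij}=\delta_{ij}$ at the base point, hence for radial $T$,
$$
(\nabla^2 T)_{rr} = T'',\quad (\nabla^2 T)_{r\theta_j} = 0,\quad (\nabla^2 T)_{\theta_i\theta_j} = rT'\delta_{ij},\quad |\nabla T|^2_{g_0} = (T')^2.
$$
Substituting these into the Schouten formula yields
$$
[P_{g_*}]_{rr} = -T'' + \tfrac{1}{2}(T')^2,\qquad [P_{g_*}]_{r\theta_j} = 0,\qquad [P_{g_*}]_{\theta_i\theta_j} = -rT'\delta_{ij} - \tfrac{r^2}{2}(T')^2\delta_{ij},
$$
which already settles the vanishing of the $r\theta_j$ component.

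For $[P_{g_*}]_{rr}$, Lemma \ref{lem.3.5} supplies $T'' = -\tfrac{n+1-s}{2s-n-2} + O\!\bigl(\tfrac{\rho_0}{2s-n-3}\bigr)$ and $(T')^2 = 1 + O(\rho_0)$, and the algebraic identity
$$
\frac{n+1-s}{2s-n-2} + \frac{1}{2} = \frac{n}{2(2s-n-2)}
$$
then assembles these into the claimed bound. For $[P_{g_*}]_{\theta_i\theta_j}$, write $T'=-1+\epsilon$ with $|\epsilon|\le C\rho_0$; expanding and using that $1-r$ and $\rho_0$ are comparable on $[\tfrac{1}{3},1)$ gives
$$
-rT' - \tfrac{r^2}{2}(T')^2 = \tfrac{1}{2} - \tfrac{(1-r)^2}{2} + r(r-1)\epsilon + O(\epsilon^2),
$$
each error term of which is $O(\rho_0^2)\le \tfrac{C\rho_0}{2s-n-3}$ uniformly on $s\in(\tfrac{n+3}{2},n)$. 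The only genuine difficulty is bookkeeping: one must track carefully which errors inherit the slow decay factor $\tfrac{1}{2s-n-3}$ from Lemma \ref{lem.3.4} (these enter only through the $T''$ term in $[P_{g_*}]_{rr}$) and which are uniformly $O(\rho_0)$ in $s$; the remainder is routine algebra.
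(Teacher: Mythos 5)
Your proof is correct and follows essentially the same route as the paper: conformally transform from the flat metric $g_0$ on the ball, compute the Hessian of the radial function $T$ in polar coordinates, and substitute the estimates of Lemma \ref{lem.3.5}. The one streamlining is that you apply the conformal transformation law directly to the Schouten tensor rather than first computing $Ric_{g_*}$ and $R_{g_*}$ and then assembling $P_{g_*}=\frac{1}{n-1}(Ric_{g_*}-\frac{1}{2n}R_{g_*}g_*)$ as the paper does; this incidentally sidesteps a small typo in the paper's intermediate scalar-curvature estimate, where the stated limit $\frac{n(2s-n-1)}{2s-n-2}$ should read $\frac{n^2(2s-n-1)}{2s-n-2}$ (the final Schouten estimates are unaffected and agree with yours).
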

\begin{proof}
In polar coordinates $(r,\theta)$ where $\theta=(\theta_1, ..., \theta_n)$ are normal coordinates on the sphere at some point $p$, $g_0=dr^2+r^2d\theta^2$. Let $\bar{\nabla}$ be the covariant derivative w.r.t. $g_0$.  Then at $p$, 
$$
\bar{\nabla}_r^2 T=T'', 
\quad
\bar{\nabla}_r \bar{\nabla}_{\theta_i} T=0, 
\quad
\bar{\nabla}_{\theta_i} \bar{\nabla}_{\theta_j}T= rT' \delta_{ij}, 
\quad
\triangle_{g_0}T=-T''-\frac{n}{r}T'.
$$
Since $Ric_{g_0}=0$, we have
$$
\begin{aligned}
\quad
&[Ric_{g_*}]_{rr} = -(n-1)(T''-|T'|^2) + (\triangle_{g_0}T-(n-1)|T'|^2), 
\\
&[Ric_{g_*}]_{\theta_i\theta_j} = [-(n-1)rT'+ (\triangle_{g_0}T-(n-1)|T'|^2)r^2]\delta_{ij}, 
\\
&[Ric_{g_*}]_{r\theta_j} =0. 
\end{aligned}
$$
Applying the estimates of $T$ in Lemma \ref{lem.3.2}-\ref{lem.3.5}, 
$$
\begin{aligned}
&\left|[Ric_{g_*}]_{rr}- \frac{n(s-1)}{2s-n-2}\right| \leq  \frac{C\rho_0}{2s-n-3}, 
\\
&\left|[Ric_{g_*}]_{\theta_i\theta_j}-\left(n+\frac{n+1-s}{2s-n-2}\right)\delta_{ij}\right| \leq  \frac{C\rho_0}{2s-n-3},
\\
&\left|[Ric_{g_*}]_{r\theta_j}\right|=0. 
\end{aligned}
$$
Hence the scalar curvature of $g_*$ satisfies
$$
\left|R_{g_*} - \frac{n(2s-n-1)}{2s-n-2}\right| \leq  \frac{C\rho_0}{2s-n-3}. 
$$
Since
$P_{g_*}=\frac{1}{n-1}(Ric_{g_*}-\frac{1}{2n}R_{g_*}g_*)$, we finish the proof. 
\end{proof}
%

%%%%%%
\vspace{0.2in}
\section{Inequalities on $\mathbb{S}^4$}
Consider $\mathbb{S}^4$ as the conformal infinity of $\mathbb{H}^{4+1}$ and take the boundary metric to be the standard spherical metric $d\theta^2$. 
For $\gamma\in (0,2)$, the sharp fractional power Sobolev inequality on $\mathbb{S}^4$ is
\begin{equation}\label{ineq.s}
Y_{\gamma}(\mathbb{S}^4)   \|f\|^2_{L^{\frac{4}{2-\gamma}}(\mathbb{S}^4)}
%\left( \int_{\mathbb{S}^4} |f|^{\frac{4}{2-\gamma}} d\theta \right)^{\frac{2-\gamma}{2}} 
\leq \int_{\mathbb{S}^4} fP^{}_{2\gamma} fd\theta, \quad \forall \ f\in H^{\gamma}(\mathbb{S}^4), 
\end{equation}
where
\begin{equation}\label{eq.5.1}
Y_{\gamma}(\mathbb{S}^4)=\frac{n-2\gamma}{2}Q_{2\gamma}|\mathbb{S}^4|^\frac{2\gamma}{n}=\frac{\Gamma(2+\gamma)}{\Gamma(2-\gamma)}|\mathbb{S}^4|^\frac{\gamma}{2}. 
\end{equation}
The generalised Moser-Trudinger-Onofri inequality on $\mathbb{S}^4$ was given by the energy associated to the Paneitz operator $P_4$ on it: 
\begin{equation}\label{ineq.mto}
3 \ln \fint e^{4\omega}
\leq \fint_{\mathbb{S}^4}\left(|\triangle_{\theta}\omega|^2+2|\nabla_{\theta}\omega|^2 \right)d\theta + 3 \fint 4\omega, \quad \forall \ \omega\in H^{\gamma}(\mathbb{S}^4). 
\end{equation}
This inequality was first established in \cite{BCY1} and \cite{Be1}, and extended to general higher dimensions in \cite{Be1}. 

\begin{theorem}\label{thm.2}
We can derive the generalised Morser-Trudinger-Onofri inequality (\ref{ineq.mto}) as the limit of Sobolev inequality (\ref{ineq.s}). 
\end{theorem}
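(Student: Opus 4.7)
The plan is to mirror the argument of Theorem 1 but in the higher-order setting $\gamma\in(1,2)$, now using the extension formula (\ref{eq.2.4}) in place of (\ref{eq.2.1}), the minimization property of Lemma \ref{lem.2.2} in place of Lemma \ref{lem.2.1}, and the refined $\rho_{*}$- and curvature-estimates of Section 4 (Lemmas \ref{lem.3.2}--\ref{lem.3.7}) in place of the elementary bounds that sufficed on $\mathbb{S}^2$. Given $\omega\in C^\infty(\mathbb{S}^4)$, substitute $f=e^{(2-\gamma)\omega}$ into (\ref{ineq.s}); since $\tfrac{n-2\gamma}{2}Q_{2\gamma}=\Gamma(2+\gamma)/\Gamma(2-\gamma)$, subtracting this times $\int|f|^2\,d\theta$ from both sides yields
\[
\frac{\Gamma(2+\gamma)}{\Gamma(2-\gamma)}\Bigl[|\mathbb{S}^4|^{\gamma/2}\bigl(\textstyle\int_{\mathbb{S}^4}e^{4\omega}\,d\theta\bigr)^{\!(2-\gamma)/2}-\textstyle\int_{\mathbb{S}^4}e^{2(2-\gamma)\omega}\,d\theta\Bigr]\le \int_{\mathbb{S}^4}\bigl(fP_{2\gamma}f-(2-\gamma)Q_{2\gamma}|f|^2\bigr)\,d\theta.
\]
Divide by $(2-\gamma)^2$ and let $\gamma\to 2^-$. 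On the left, Taylor expanding in $u=2-\gamma$ and using $\Gamma(2+\gamma)/\Gamma(2-\gamma)=6(2-\gamma)+O((2-\gamma)^2)$ exactly as in Lemma \ref{lem.4.1}, L'H\^opital gives the limit $3|\mathbb{S}^4|\ln\fint_{\mathbb{S}^4}e^{4(\omega-\bar\omega)}\,d\theta$.

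For the right-hand side, invoke (\ref{eq.2.4}) together with Lemma \ref{lem.2.2} to obtain $\int(fP_{2\gamma}f-(2-\gamma)Q_{2\gamma}|f|^2)\,d\theta\le (d_\gamma/8\gamma(\gamma-1))I_1(V)$ for any admissible $V\in \mathcal{V}^1_f$. Take the test function $V=e^{(2-\gamma)\Omega}$, with $\Omega(r,\theta)=\chi(r)\omega(\theta)$ and $\chi$ the same cutoff as in Theorem 1. Because $\Omega$ is $r$-independent on the collar $r\in[\tfrac23,1]$, one has $\partial_{\rho_*}V\equiv 0$ there, so both boundary conditions defining $\mathcal{V}^1_f$ are met. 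Direct computation gives $\nabla V=(2-\gamma)V\nabla\Omega$ and $\Delta_{\phi_1}V=(2-\gamma)V[\Delta_{\phi_1}\Omega+(2-\gamma)|\nabla\Omega|^2_{g_*}]$, so every term of the integrand in $I_1(V)$ carries an explicit factor $(2-\gamma)^2$. Combining with the expansion $d_\gamma/(8\gamma(\gamma-1))=2(2-\gamma)+O((2-\gamma)^2)$ near $\gamma=2$ coming from $d_\gamma=32(2-\gamma)+O((2-\gamma)^2)$, one arrives at
\[
\frac{\mathrm{RHS}}{(2-\gamma)^2}\ \lesssim\ 2(2-\gamma)\int_{\mathbb{B}^5}V^2\Bigl(\bigl|\Delta_{\phi_1}\Omega+(2-\gamma)|\nabla\Omega|^2\bigr|^2+(n+m_1-1)J_{\phi_1}^{m_1}|\nabla\Omega|^2_{g_*}-4P_{ij}\nabla^i\Omega\nabla^j\Omega\Bigr)\rho_*^{m_1}\,d\mathrm{vol}_{g_*},
\]
and the task is to show this converges to $\int_{\mathbb{S}^4}(|\Delta_\theta\omega|^2+2|\nabla_\theta\omega|^2)\,d\theta$.

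The main obstacle is that the prefactor $2(2-\gamma)$ vanishes while the integral diverges like $(2-\gamma)^{-1}$ from the weight $\rho_*^{m_1}=\rho_*^{3-2\gamma}$ near $r=1$, so a careful near-boundary analysis is required. Split $\mathbb{B}^5$ as in the proof of Lemma \ref{lem.4.2} into a bulk region $\{|x|\le\tfrac23\}$ and a collar $\{\tfrac23<|x|<1\}$. On the bulk, $\rho_*^{m_1}$ is uniformly bounded in $\gamma$ (by Lemma \ref{lem.3.1}), so its integral is $O(1)$ and the contribution after multiplication by $2(2-\gamma)$ vanishes. On the collar, $\Omega=\omega(\theta)$ has vanishing radial derivative, so $\nabla\Omega$ is purely tangential and the drift term in $\Delta_{\phi_1}\Omega$ drops out, yielding $\Delta_{\phi_1}\Omega=e^{-2T}r^{-2}\Delta_\theta\omega$. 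Lemmas \ref{lem.3.6} and \ref{lem.3.7} furnish $J^{m_1}_{\phi_1}\to 2/(\gamma-1)$, $P_{rr}\to 1/(\gamma-1)$ and $P_{\theta_i\theta_j}\to\tfrac12\delta_{ij}$ in normal coordinates on $\mathbb{S}^4$, with $O(\rho_0/(2s-n-3))$ error that remains integrable against $\rho_*^{m_1}$. Substituting these limiting values, the quadratic-form pieces combine: $(n+m_1-1)J^{m_1}_{\phi_1}|\nabla\Omega|^2_{g_*}-4P_{ij}\nabla^i\Omega\nabla^j\Omega$ tends to $2r^{-2}|\nabla_\theta\omega|^2$, while $|\Delta_{\phi_1}\Omega|^2\to r^{-4}|\Delta_\theta\omega|^2$, and the lower-order $(2-\gamma)|\nabla\Omega|^2$ cross-term vanishes in the limit.

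The remaining radial integral factors out exactly as in (\ref{eq.4.12})--(\ref{eq.4.14}): since $\rho_0(r)\simeq 1-r$ near $r=1$, one has
\[
2(2-\gamma)\int_{2/3}^1 \rho_0^{3-2\gamma}\,r^4\,dr\ \longrightarrow\ 1\qquad\text{as }\gamma\to 2^-,
\]
using $\int_{2/3}^1(1-r)^{3-2\gamma}\,dr\sim 1/(2(2-\gamma))$ and the uniform control of $\rho_*/\rho_0$ from Lemmas \ref{lem.3.2}--\ref{lem.3.5}. Collecting the angular integral of $|\Delta_\theta\omega|^2+2|\nabla_\theta\omega|^2$ against $d\theta$, this yields
\[
\lim_{\gamma\to 2^-}\mathrm{RHS}/(2-\gamma)^2=\int_{\mathbb{S}^4}\bigl(|\Delta_\theta\omega|^2+2|\nabla_\theta\omega|^2\bigr)\,d\theta,
\]
and dividing the resulting inequality by $|\mathbb{S}^4|$ is (\ref{ineq.mto}). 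The delicate technical point is bookkeeping the various $1/(2s-n-2)$- and $1/(2s-n-3)$-singular curvature coefficients so that the near-boundary errors stay integrable against the singular weight $\rho_*^{3-2\gamma}$; this is precisely what the refined estimates of Section 4 were built to achieve.
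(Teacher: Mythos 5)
Your proposal is correct and follows essentially the same route as the paper: substitute $f=e^{(2-\gamma)\omega}$ into (\ref{ineq.s}), divide by $(2-\gamma)^2$, use the $\gamma\in(1,2)$ extension formula (\ref{eq.2.4}) and the minimization property of Lemma~\ref{lem.2.2} with the test function $V=e^{(2-\gamma)\Omega}$, then split the ball into a bulk region and a collar, feeding in the Section~4 bounds on $T$, $J_{\phi_1}^{m_1}$, and $P_{g_*}$ and the divergent radial integral $\int\rho_0^{3-2\gamma}dr\sim 1/(2(2-\gamma))$ to pass to the limit $\gamma\to 2^-$. The only cosmetic difference is that where you write the curvature combination as simply tending to $2r^{-2}|\nabla_\theta\omega|^2$, the paper keeps the exact expansion $\left(\frac{8(2-\gamma)}{\gamma-1}+2\right)|\nabla_\theta\omega|^2 + O(\rho_0/(2s-n-3))$, which is what Lemmas~\ref{lem.3.6}--\ref{lem.3.7} actually supply (and which reduces to $2$ as $\gamma\to 2$, $r\to 1$); this extra precision is needed to verify that the error stays integrable against the singular weight, as you correctly flag at the end.
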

\begin{proof}

On $\mathbb{S}^4$, for any $\omega\in C^{\infty}(\mathbb{S}^4)$,  take $f=e^{(2-\gamma)\omega}$. Apply 
 the sharp Sobolev inequality (\ref{ineq.s}) to $f$ and rewrite the inequality as
\begin{equation}\label{eq.5.2}
Y_{\gamma}(\mathbb{S}^4) \left( \int_{\mathbb{S}^4} |f|^{\frac{4}{2-\gamma}} d\theta \right)^{\frac{2-\gamma}{2}} 
-(2-\gamma) \int_{\mathbb{S}^4}Q_{2\gamma} |f|^2 d\theta
\leq \int_{\mathbb{S}^4} f P_{2\gamma}f d\theta - (2-\gamma) \int_{\mathbb{S}^4} Q_{2\gamma}|f|^2 d\theta. 
\end{equation}
Divide both side of (\ref{eq.5.2}) by 
$(2-\gamma)^2$. Then by taking $\gamma\rightarrow 2$, the LHS of (\ref{eq.5.2}) tends to $3|\mathbb{S}^4|$ times 
$\ln \fint e^{4\omega-4\bar{\omega}}d\theta$, where $\bar{\omega}$ is the average of $\omega$ over $\mathbb{S}^4$, and the RHS of  (\ref{eq.5.2}) tends to $|\mathbb{S}^4|$ times the high energy $ \fint_{\mathbb{S}^4}\left(|\triangle_{\theta}\omega|^2+2|\nabla_{\theta}\omega|^2 \right)d\theta$. This establishes the theorem. 

To prove this, we restrict $\gamma\in (1,2)$ and set $m_1=3-2\gamma\in (-1,1)$. Let $\rho_*$ be the adapted boundary defining function and $g_*=\rho_*^2g_+$. Let $U_f$ be defined in (\ref{eq.2.5}). Then  (\ref{eq.2.4}) (\ref{eq.5.1}) (\ref{eq.5.2}) and Lemma \ref{lem.2.2} imply that
\begin{equation}\label{eq.5.3}
\begin{aligned}
&\frac{\Gamma(2+\gamma)}{\Gamma(2-\gamma)} \left[ |\mathbb{S}^4|^{\frac{\gamma}{2}}  \left( \int_{\mathbb{S}^4} |f|^{\frac{4}{2-\gamma}} d\theta \right)^{\frac{2-\gamma}{2}}
- \int_{\mathbb{S}^4} |f|^2 d\theta  \right]
\\
\leq\  &\int_{\mathbb{S}^4}\left[ fP_{2\gamma}f - (2-\gamma)Q_{2\gamma}f^2 \right] d\theta
\\
=\ &  
\frac{d_{\gamma}}{8\gamma(\gamma-1)}
\int_{\mathbb{B}^5}  \left(  |\triangle_{\phi_1} U_f |^2+(n+m_1-1)J_{\phi_1}^{m_1}|\nabla U_f|^2_{g_*}
-4P_{ij} \nabla^iU_f \nabla^jU_f \right) \rho_*^{m_1} \mathrm{dvol}_{g_*}
\\
\leq\  & \frac{d_{\gamma}}{8\gamma(\gamma-1)}
\int_{\mathbb{B}^5}  \left(  |\triangle_{\phi_1} V|^2+(n+m_1-1)J_{\phi_1}^{m_1}|\nabla V|^2_{g_*}
-4P_{ij}\nabla^iV \nabla^jV \right) \rho_*^{m_1} \mathrm{dvol}_{g_*}, 
\quad \forall V\in \mathcal{V}^1_f. 
\end{aligned}
\end{equation}
Here $\nabla, \triangle_{\phi_1}, J_{\phi_1}^{m_1}, P$ are all with respect to the adapted metric $g_*$.

Extend $\omega$ to a smooth function on the ball:
$$
\Omega(r,\theta)=\chi(r)\omega(\theta)
$$
where $\chi\in C^{\infty}([0,1])$ satisfies $0\leq \chi\leq 1$, $\chi(r)=0$ for $r\in [0,\frac{1}{3})$ and $\chi(r)=1$ for $\gamma\in (\frac{1}{3}, 1]$. 
Define
$$
V=e^{(2-\gamma)\Omega}.
$$
Obviously, $V|_M= f$ and $V\in\mathcal{V}^1_f$. Then (\ref{eq.5.3}) implies
\begin{equation}\label{eq.5.4}
\begin{aligned}
&\frac{\Gamma(2+\gamma)}{\Gamma(2-\gamma)} \left[ |\mathbb{S}^4|^{\frac{\gamma}{2}}  \left( \int_{\mathbb{S}^4} e^{4\omega} d\theta \right)^{\frac{2-\gamma}{2}}
- \int_{\mathbb{S}^4} e^{2(2-\gamma)\omega} d\theta  \right]
\\
\leq\ & \frac{d_{\gamma}}{8\gamma(\gamma-1)} \left(2-\gamma\right)^2
\int_{\mathbb{B}^5} e^{2(2-\gamma)\Omega} \left(  \left[\triangle_{\phi_1} \Omega-(2-\gamma) |\nabla \Omega|^2_{g_*}\right]^2 \right.
\\
&\quad\quad\quad\quad\quad\quad\quad\quad\quad\quad\quad
\left.+(n+m_1-1)J_{\phi_1}^{m_1}|\nabla \Omega|^2_{g_*}
-4P_{ij}  \nabla^i\Omega \nabla^j\Omega \right) \rho_*^{m_1} \mathrm{dvol}_{g_*}
\end{aligned}
\end{equation}
For $\gamma\in (1,2)$, divide both side of (\ref{eq.5.4}) by $(2-\gamma)^2$ and we get 
\begin{equation}\label{eq.5.5}
\begin{aligned}
A_1(\gamma,\omega)=&\ 
\frac{\Gamma(2+\gamma)}{2\Gamma(3-\gamma)}\frac{2}{(2-\gamma)} 
\left[ |\mathbb{S}^4|^{\frac{\gamma}{2}}  \left( \int_{\mathbb{S}^4} e^{4\omega} d\theta \right)^{\frac{2-\gamma}{2}}
- \int_{\mathbb{S}^4} e^{2(2-\gamma)\omega} d\theta  \right]
\\
\leq\  & \frac{2^{2\gamma-3}\Gamma(\gamma)}{\Gamma(3-\gamma)}(2-\gamma)
\int_{\mathbb{B}^5} e^{2(2-\gamma)\Omega} \left(  \left[\triangle_{\phi_1} \Omega - (2-\gamma)|\nabla \Omega|^2_{g_*}\right]^2 \right.
\\
&\quad\quad\quad\quad\quad\quad\quad\quad\quad
\left.+(n+m_1-1)J_{\phi_1}^{m_1}|\nabla \Omega|^2_{g_*}
-4P_{ij}\nabla^i\Omega\nabla^j\Omega \right) \rho_*^{m_1} \mathrm{dvol}_{g_*}
\\
=\ &B_1(\gamma, \omega).
\end{aligned}
\end{equation}
Then the theorem is proved by Lemma \ref{lem.5.1} and Lemma \ref{lem.5.2}
\end{proof}

\begin{lemma}\label{lem.5.1}
For $\gamma\in (1,2)$ and  $A_1(\gamma,\omega)$ defined in (\ref{eq.5.5}), 
$$
\lim_{\gamma\rightarrow 2-} A_1(\gamma, \omega) =3 |\mathbb{S}^4| \ln  \left(\frac{1}{|\mathbb{S}^4|} \int_{\mathbb{S}^4} e^{4\omega-4\bar{\omega}}d\theta\right). 
$$
\end{lemma}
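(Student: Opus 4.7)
The plan is to show that $A_1(\gamma,\omega)$ is of the form (constant)$\times L(\gamma)/(2-\gamma)$ with $L(2)=0$, then apply L'H\^opital (or, equivalently, a first-order Taylor expansion in $\varepsilon = 2-\gamma$).

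First I would pull out $|\mathbb{S}^4|$ by using the identity $|\mathbb{S}^4|^{\gamma/2} = |\mathbb{S}^4| \cdot |\mathbb{S}^4|^{(\gamma-2)/2}$ to rewrite the bracket as
$$
L(\gamma) := |\mathbb{S}^4|^{\gamma/2}\left(\int_{\mathbb{S}^4} e^{4\omega}\,d\theta\right)^{(2-\gamma)/2} - \int_{\mathbb{S}^4} e^{2(2-\gamma)\omega}\,d\theta
= |\mathbb{S}^4|\left[\left(\fint_{\mathbb{S}^4} e^{4\omega}\,d\theta\right)^{(2-\gamma)/2} - \fint_{\mathbb{S}^4} e^{2(2-\gamma)\omega}\,d\theta\right].
$$
At $\gamma=2$ each of the two averaged quantities equals $1$, so $L(2)=0$ and we are in $0/0$ form.

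Next I would compute the gamma coefficient. Since
$$
A_1(\gamma,\omega) = \frac{\Gamma(2+\gamma)}{\Gamma(3-\gamma)}\cdot\frac{L(\gamma)}{2-\gamma},
$$
and the prefactor is continuous at $\gamma=2$ with value $\Gamma(4)/\Gamma(1)=6$, the problem reduces to evaluating $\lim_{\gamma\to 2-} L(\gamma)/(2-\gamma)$.

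For this limit I would set $\varepsilon = 2-\gamma$ and Taylor expand each averaged quantity to first order in $\varepsilon$:
\begin{align*}
\left(\fint_{\mathbb{S}^4} e^{4\omega}\,d\theta\right)^{\varepsilon/2}
&= 1 + \frac{\varepsilon}{2}\ln\!\left(\fint_{\mathbb{S}^4} e^{4\omega}\,d\theta\right) + O(\varepsilon^{2}),\\
\fint_{\mathbb{S}^4} e^{2\varepsilon\omega}\,d\theta
&= 1 + 2\varepsilon\,\bar{\omega} + O(\varepsilon^{2}).
\end{align*}
Subtracting and dividing by $\varepsilon$ yields
$$
\lim_{\gamma\to 2-}\frac{L(\gamma)}{2-\gamma}
= \frac{|\mathbb{S}^4|}{2}\,\ln\!\left(\fint_{\mathbb{S}^4} e^{4\omega-4\bar{\omega}}\,d\theta\right).
$$
Multiplying by the limiting value $6$ of the gamma coefficient gives the claimed $3|\mathbb{S}^4|\,\ln(\fint e^{4\omega-4\bar{\omega}}\,d\theta)$.

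There is no serious obstacle here: the only mildly delicate point is making sure the Taylor expansions are uniformly valid (so that dominated convergence or simple continuity allows interchange of limit and integral in $\fint e^{2\varepsilon\omega}\,d\theta$), which is immediate since $\omega\in C^{\infty}(\mathbb{S}^4)$ is bounded. Bookkeeping of the $|\mathbb{S}^4|^{\gamma/2}$ factor and of the constant $\Gamma(2+\gamma)/\Gamma(3-\gamma)$ are the only places where one can easily miscalculate, so I would record each step explicitly, exactly paralleling Lemma~\ref{lem.4.1}.
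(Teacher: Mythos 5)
Your proof is correct and takes essentially the same approach as the paper: the paper simply refers to the proof of Lemma~\ref{lem.4.1}, which rewrites the bracket in $0/0$ form and applies L'H\^opital, and your first-order Taylor expansion in $\varepsilon = 2-\gamma$ is just an explicit rendering of that step. Your bookkeeping of the prefactor $\Gamma(2+\gamma)/\Gamma(3-\gamma)\to 6$ and the factor $|\mathbb{S}^4|^{\gamma/2}$ is accurate and matches the claimed constant $3|\mathbb{S}^4|$.
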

\begin{proof}
The proof is essentially the same as the proof of Lemma \ref{lem.4.1}. 
\end{proof}

\begin{lemma}\label{lem.5.2}
For $\gamma\in (1,2)$ and  $B_1(\gamma,\omega)$ defined in (\ref{eq.5.5}), 
$$
\lim_{\gamma\rightarrow 2-} B_1(\gamma, \omega) = \int_{\mathbb{S}^4} \left( |\triangle_{\theta} \omega|^2 + 2 |\nabla_{\theta}\omega|^2 \right) d\theta. 
$$
\end{lemma}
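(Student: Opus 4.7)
The plan follows the template of Lemma \ref{lem.4.2}: split $B_1(\gamma,\omega)=I(\gamma,\Omega)+II(\gamma,\Omega)$ by cutting the radial integral at $r=2/3$, and restrict attention to $\gamma\in(3/2,2)$ so that all estimates of Section 4 apply uniformly in $\gamma$. On the inner region $r\in[1/3,2/3]$, $\rho_0$ is bounded away from $0$, and Lemmas \ref{lem.3.5}--\ref{lem.3.7} give uniform bounds on $T$, $\rho_*/\rho_0$, $J_{\phi_1}^{m_1}$, and the components of $P_{g_*}$. Since $\Omega$, $|\nabla\Omega|_{g_*}$, and $|\triangle_{\phi_1}\Omega|$ are also uniformly bounded there, the integrand of $I$ is uniformly bounded, and the explicit $(2-\gamma)$ prefactor in $B_1$ forces $I(\gamma,\Omega)\to 0$ as $\gamma\to 2^-$, exactly as in the proof of Lemma \ref{lem.4.2}.

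On the boundary region $r\in[2/3,1]$, $\chi\equiv 1$ so $\Omega=\omega(\theta)$ is independent of $r$. Because $T(r)$ and $\rho_*(r)$ are radial, $g_*(\nabla\rho_*,\nabla\omega)=0$, hence $\triangle_{\phi_1}\Omega=\triangle_{g_*}\Omega$; using $g_*=e^{2T}(dr^2+r^2\,d\theta^2)$ direct computations yield
\begin{equation*}
\triangle_{g_*}\Omega=\frac{e^{-2T}}{r^2}\triangle_{\theta}\omega,\qquad |\nabla\Omega|^2_{g_*}=\frac{e^{-2T}}{r^2}|\nabla_{\theta}\omega|^2,\qquad \rho_*^{m_1}\mathrm{dvol}_{g_*}=\rho_0^{m_1}e^{(m_1+5)T}r^4\,dr\,d\theta.
\end{equation*}
Only the tangential block of $P_{g_*}$ contributes to $P_{ij}\nabla^i\Omega\nabla^j\Omega$, and by Lemma \ref{lem.3.7} this equals $\tfrac12(e^{-2T}/r^2)^2|\nabla_{\theta}\omega|^2$ up to errors of size $\rho_0/(2s-n-3)$. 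Writing the integrand of $II$ as $\rho_0^{m_1}\Psi_\gamma(r,\theta)\,dr\,d\theta$ with $\Psi_\gamma$ uniformly bounded for $\gamma\in(3/2,2)$, the boundary value $T(1)=0$ together with Lemmas \ref{lem.3.6}--\ref{lem.3.7} yields
\begin{equation*}
\lim_{\gamma\to 2^-}\Psi_\gamma(1,\theta)=|\triangle_{\theta}\omega|^2+2|\nabla_{\theta}\omega|^2,
\end{equation*}
since $(6-2\gamma)\cdot n/(2s-n-2)\to 4$ at $\gamma=2$ and the Schouten term contributes $-2$.

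The final step evaluates the $r$-integral. Since $\rho_0\sim 1-r$ near $r=1$ and $m_1+1=2(2-\gamma)$, a direct computation gives $\lim_{\gamma\to 2^-}(2-\gamma)\int_{2/3}^1\rho_0(r)^{m_1}\,dr=\tfrac12$, which combined with $\lim_{\gamma\to 2^-}2^{2\gamma-3}\Gamma(\gamma)/\Gamma(3-\gamma)=2$ produces the overall constant $1$ and yields the claimed limit. The main obstacle is controlling the error terms: the estimates in Lemmas \ref{lem.3.5}--\ref{lem.3.7} carry errors of size $\rho_0/(2s-n-3)$, which against the near-singular weight $\rho_0^{m_1}$ contribute terms of order $(2-\gamma)\int\rho_0^{m_1+1}\,dr=O(2-\gamma)$ and hence vanish in the limit. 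Similarly the cross term $-2(2-\gamma)\triangle_{g_*}\Omega\cdot|\nabla\Omega|^2_{g_*}$ and the square $(2-\gamma)^2|\nabla\Omega|^4_{g_*}$ arising from expanding $[\triangle_{\phi_1}\Omega-(2-\gamma)|\nabla\Omega|^2_{g_*}]^2$ are $O(2-\gamma)$ and $O((2-\gamma)^2)$ pointwise and drop out. Carefully bookkeeping these cancellations, and verifying that the radial and $[P_{g_*}]_{r\theta_j}=0$ components indeed produce no leading contribution because $\omega$ has no radial derivative, is where the bulk of the work lies.
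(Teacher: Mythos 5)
Your proposal is correct and follows essentially the same route as the paper's own proof: split off the compactly supported region $r\in[1/3,2/3]$ where uniform boundedness plus the $(2-\gamma)$ prefactor kills the contribution, use the radial independence of $\Omega$ and the estimates of Lemmas \ref{lem.3.2}--\ref{lem.3.7} on $r\in[2/3,1]$, observe that the leading curvature terms combine to $(6-2\gamma)\tfrac{n}{2s-n-2}-2 \to 2$ and that the error terms of size $\rho_0/(2\gamma-3)$ as well as the cross and square terms from expanding $[\triangle_{\phi_1}\Omega-(2-\gamma)|\nabla\Omega|^2_{g_*}]^2$ contribute only $O(2-\gamma)$ after integration, and finally evaluate $(2-\gamma)\int_{2/3}^1\rho_0^{m_1}dr\to\tfrac12$ against $2^{2\gamma-3}\Gamma(\gamma)/\Gamma(3-\gamma)\to 2$. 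One small inaccuracy to fix: the estimates of Section 4 are \emph{not} uniform over all of $\gamma\in(3/2,2)$, since their constants involve $1/(2s-n-3)=1/(2\gamma-3)$ which blows up as $\gamma\to 3/2^+$; you should restrict to a compact subinterval such as $[7/4,2)$ (as the paper does), which is all that is needed for the limit $\gamma\to 2^-$.
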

\begin{proof}
First by the definition of $\Omega$, we notice that $\triangle_{\phi_1}\Omega=0$ and $\nabla \Omega=0$ for $r\in [0,\frac{1}{3})$. Hence we only need to take the integration in $B_1(\gamma, \Omega)$ over $r\in[\frac{1}{3}, 1)$. Recall that $t=\rho_*/\rho_0$ and $T=\ln t$. Then by the conformal transformation $g_*=e^{2T}g_0$, we get
$$
\begin{aligned}
\triangle_{\phi_1}\Omega &= e^{-2T}\left(\triangle_{g_0} \Omega-m_1 \langle \bar{\nabla} \ln \rho_0,\bar{\nabla} \Omega\rangle_{g_0}-(n+m_1-1)\langle \bar{\nabla} T,\bar{\nabla} \Omega\rangle_{g_0}\right)
\\
&= e^{-2T}\left(\bar{\triangle}_{\phi_1} \Omega-(n+m_1-1)\langle \bar{\nabla} T,\bar{\nabla} \Omega\rangle_{g_0}\right);
\\
|\nabla \Omega|^2_{g_*} &= e^{-2T}|\bar{\nabla} \Omega|^2_{g_0}. 
\end{aligned}
$$
Here $\bar{\nabla}$ and $\bar{\triangle}_{\phi_1}$ are the connection and the weighted Laplacian  w.r.t. flat metric $g_0$. By Lemma \ref{lem.3.1}-\ref{lem.3.5} and the smoothness of $\Omega$, there exists a constant $C_1>0$ independent of $\gamma\in[\frac{7}{4},2)$ and $r\in [\frac{2}{3},1]$ such that 
$$
|\triangle_{\phi_1}\Omega|\leq C_1, \quad |\nabla \Omega|^2_{g_*}  \leq C_1. 
$$
Hence considering the first part: 
$$
\begin{aligned}
I(\gamma,\Omega)=\ &2(2-\gamma)
\int_{\mathbb{B}^5} e^{2(2-\gamma)\Omega}   \left[\triangle_{\phi_1} \Omega-(2-\gamma)|\nabla \Omega|^2_{g_*}\right]^2
 \rho_*^{m_1} \mathrm{dvol}_{g_*}
 \\
 =\ &    2(2-\gamma)
\int_{\mathbb{B}^5} e^{2(2-\gamma)\Omega} e^{2(2-\gamma)T}   \left[ \bar{\triangle}_{\phi_1} \Omega
 -(n+m_1-1)\langle \bar{\nabla} T,\bar{\nabla} \Omega\rangle_{g_0}
-(2-\gamma) |\bar{\nabla} \Omega|^2_{g_0}\right]^2  \rho_0^{3-2\gamma} \mathrm{dvol}_{g_0}
\\
=\ & 2(2-\gamma)\left(\int_{\mathbb{S}^4}\int_{\frac{1}{3}}^{\frac{2}{3}} +\int_{\mathbb{S}^4}\int_{\frac{2}{3}}^1 \right)
\\
=\ & I_1(\gamma, \Omega)+ I_2(\gamma, \Omega). 
\end{aligned}
$$
From the uniform bound of integrand, it is obviously that
$$
\lim_{\gamma\rightarrow 2-}I_1(\gamma, \Omega)=0.
$$ 
While $r\in[\frac{2}{3}, 1]$, 
$\Omega(r,\theta)=\omega(\theta), \partial_r\Omega=0,$
which imply that
$$
\triangle_{g_0}\Omega= \frac{1}{r^2} \triangle_{\theta}\omega, \quad 
\langle \bar{\nabla} T, \bar{\nabla} \Omega\rangle_{g_0}=0, \quad
\langle  \bar{\nabla} \ln \rho_0, \bar{\nabla} \Omega\rangle_{g_0}=0, \quad
| \bar{\nabla} \Omega|^2_{g_0}=\frac{1}{r^2}|\nabla_{\theta}\omega|^2. 
$$
Therefore, similar as the proof of Lemma \ref{lem.4.2}, we have
$$
\begin{aligned}
\lim_{\gamma\rightarrow 2-}I_2(\gamma, \Omega) 
= \lim_{\gamma\rightarrow 2-} 2(2-\gamma) \left(\int_{\frac{2}{3}}^1  e^{2(2-\gamma)T} \rho_0^{3-2\gamma} dr\right)
\left( \int_{\mathbb{S}^4}  e^{2(2-\gamma)\omega}  |\triangle_{\theta}\omega|^2 d\theta\right)
=  \int_{\mathbb{S}^4}   |\triangle_{\theta}\omega|^2 d\theta. 
 \end{aligned}
$$
Next, consider the second part:
$$
\begin{aligned}
II(\gamma, \Omega)=\ & 2(2-\gamma)
\int_{\mathbb{B}^5} e^{2(2-\gamma)\Omega} \left[(n+m_1-1)J_{\phi_1}^{m_1}|\nabla \Omega|^2_{g_*}
-4P_{ij}\nabla^i\Omega\nabla^j\Omega \right] \rho_*^{m_1} \mathrm{dvol}_{g_*}
\\
=\  & 2(2-\gamma)
\int_{\mathbb{B}^5} e^{2(2-\gamma)\Omega} e^{2(2-\gamma)T} 
\left[(n+m_1-1)e^{2T}J_{\phi_1}^{m_1}| \bar{\nabla} \Omega|^2_{g_0}
-4P_{ij} \bar{\nabla}^i\Omega   \bar{\nabla}^j\Omega \right] \rho_0^{m_1} \mathrm{dvol}_{g_0}
\\
=\ & 2(2-\gamma)\left(\int_{\mathbb{S}^4}\int_{\frac{1}{3}}^{\frac{2}{3}} +\int_{\mathbb{S}^4}\int_{\frac{2}{3}}^1 \right)
\\
=\ & II_1(\gamma, \Omega)+ II_2(\gamma, \Omega). 
\end{aligned}
$$
For $II_1(\gamma, \Omega)$, Lemma \ref{lem.3.1}-\ref{lem.3.7} and the smoothness of $\Omega$ give the uniform bound of the integrand for $r\in[\frac{1}{3}, \frac{2}{3}]$, which implies that 
$$
\lim_{\gamma\rightarrow 2-} II_1(\gamma, \Omega)=0. 
$$
For $II_2(\gamma, \Omega)$, Lemma \ref{lem.3.6}-\ref{lem.3.7} shows that there exists a constant $C_2>0$ independent of $\gamma\in[\frac{7}{4},2)$ and $r\in [\frac{2}{3},1]$ such that
$$
\begin{gathered}
 \left((n+m_1-1)e^{2T}J_{\phi_1}^{m_1}| \bar{\nabla} \Omega|^2_{g_0}
-4P_{ij} \bar{\nabla}^i\Omega   \bar{\nabla}^j\Omega \right) - \left(\frac{8(2-\gamma)}{\gamma-1}+2\right)|\nabla_{\theta}\omega|^2 =E(\gamma, \Omega)\rho_0;
\\
\left| E(\gamma,\Omega)\right| \leq C_2. 
\end{gathered}
$$
Thus
$$
\begin{aligned}
\lim_{\gamma\rightarrow 2-} II_2(\gamma, \Omega)
=&\ \lim_{\gamma\rightarrow 2-} 2(2-\gamma)
\int_{\mathbb{S}^4}\int_{\frac{2}{3}}^1
e^{2(2-\gamma)\omega} e^{2(2-\gamma)T} \left(\frac{8(2-\gamma)}{\gamma-1}+2\right)|\nabla_{\theta}\omega|^2 \rho_0^{3-2\gamma}
r^4drd\theta
\\
=\ & \int_{\mathbb{S}^4} 2|\nabla_{\theta}\omega|^2 d\theta.
\end{aligned}
$$
We finish the proof.
\end{proof}

\begin{remark}
It turns out when $n$ is even, there is a generalisation of the Moser-Trudinger-Onofri inequality (\cite{Be1}) for functions defined on $(\mathbb{S}^n,d\theta^2)$ with the role of Laplace operator on $(\mathbb{S}^2,d\theta^2)$ replaced by the $n$-th order GJMS operator. It is plausible the argument in this paper can be applied to derive these inequalities as the limit of Sobolev embedding inequalities for the corresponding fractional $2\gamma$-order GJMS operators as $\gamma\rightarrow n/2$. But as the proof of our theorems (for $n=2$ and $n=4$ cases) indicate, the argument would become increasingly delicate when $n$ is large. 
\end{remark}

\vspace{0.2in}
%%%%%%%

\end{document}